\documentclass{article}

\usepackage{hyperref}
\usepackage{amsmath}
\usepackage{amssymb}
\usepackage{amsthm}
\usepackage{tikz}
\usetikzlibrary{arrows.meta}

\newtheorem{lemma}{Lemma}
\newtheorem{theorem}{Theorem}
\newtheorem*{theorem*}{Theorem}

\newcommand{\inv}[1]{
	\!\;
	\overline{
		\!\!\: #1 \vphantom{!} \!\!\:
	}
	\;\!
}
\newcommand{\leqt}{ \trianglelefteq }
\newcommand{\up}[2]{ { ^{#1} \! {#2} } }

\DeclareMathOperator{\Image}{Im}
\DeclareMathOperator{\Aut}{Aut}
\DeclareMathOperator{\Hom}{Hom}

\DeclareMathOperator{\Mat}{M}
\DeclareMathOperator{\Out}{Out}
\DeclareMathOperator{\Jac}{J}
\newcommand{\sMat}[4]{
	\bigl(
		\begin{smallmatrix}
				#1 \mathstrut
			&
				#2 \mathstrut
			\\
				#3 \mathstrut
			&
				#4 \mathstrut
		\end{smallmatrix}
	\bigr)
}

\newcommand{\Sym}{ \mathrm{S} }
\newcommand{\Field}{ \mathbb{F} }
\newcommand{\Real}{ \mathbb{R} }
\newcommand{\Int}{ \mathbb{Z} }

\newcommand{\fppf}{ \mathrm{fppf} }
\newcommand{\ModSheaf}{ \mathop{ \mathbb{W} } }

\newcommand{\e}{ \mathrm{e} }
\newcommand{ \RootSys }[2]{ \mathsf{#1}_{#2} }
\newcommand{ \TitsIndex }[5]{
	\up{#2}{ \mathsf{#1}_{#3, #4}^{#5} }
}

\DeclareMathOperator{\Heis}{Heis}
\DeclareMathOperator{\Hyp}{H}
\newcommand{\OddFormPar}{ \mathcal{L} }
\newcommand{\hdisc}{ \mathop{ \mathrm{hdisc} } }

\DeclareMathOperator{\GenLin}{GL}
\DeclareMathOperator{\SpecLin}{SL}
\DeclareMathOperator{\Elem}{E}
\DeclareMathOperator{\KFunc}{K}

\newcommand{ \ProjGenLin }{ \mathbb{PGL} }

\DeclareMathOperator{\Orth}{O}
\DeclareMathOperator{\SpecOrth}{SO}
\DeclareMathOperator{\Refl}{Refl}
\newcommand{ \SpecOrthSch }{ \mathbb{SO} }
\newcommand{ \ProjSpecOrthSch }{ \mathbb{PSO} }

\DeclareMathOperator{\Unit}{U}
\DeclareMathOperator{\SpecUnit}{SU}
\DeclareMathOperator{\ElemUnit}{EU}
\DeclareMathOperator{\KUnit}{KU}
\newcommand{\UnitSch}{ \mathbb{U} }

\newcommand{\liealg}{ \mathfrak{g} }

\DeclareMathOperator{\ChevGrp}{G}

\title{
	Solvability of isotropic
	\( \KFunc_1 \)-functor \\
	over semilocal rings
}

\author{
	Egor Voronetsky%
	\thanks{
		This research is supported by the Russian Science Foundation grant 26-11-00085.
	} \\
	Saint Petersburg University, \\
	7/9 Universitetskaya nab., \\
	St. Petersburg, 199034 Russia
}

\begin{document}
\maketitle

\begin{abstract}
	We show that the
	\( \KFunc_1 \)-functor modeled on simple reductive groups over semilocal rings is solvable if the isotropic rank is at least
	\( 2 \) and that the Tits index is neither
	\( \TitsIndex{E}{2}{6}{2}{16''} \) nor
	\( \TitsIndex{E}{}{8}{2}{78} \). For these two Tits indices the result is already known, but assuming that the base ring contains a field. Our result implies that the elementary subgroup (or its derived subgroup) is the maximal perfect subgroup of the reductive group.
\end{abstract}

\section{Introduction}

Let
\( G \) be a reductive group scheme over a semilocal ring
\( K \) with connected spectrum. It is well known
\cite[Exp. XXVI]{red-grp-sch} that
\( G \) contains a minimal parabolic subgroup
\( P \leq G \) and all such parabolic subgroups are conjugate by the group
\( G(K) \). Fix two opposite parabolic subgroups
\( P^{+}, P^{-} \leq G \). The subgroup
\[
	\Elem_G(K)
	=
	\bigl\langle
		\mathrm{R}_{ \mathrm{u} }( P^{+} )(K),
		\mathrm{R}_{ \mathrm{u} }( P^{-} )(K)
	\bigr\rangle
	\leq
	G(K)
\]
generated by
\( K \)-points of the unipotent radicals of
\( P^{\pm} \) is called the elementary subgroup of
\( G(K) \), it is normal and independent of the choice of
\( P^{\pm} \). In the field case the elementary subgroup is also often denoted by
\( G(K)^{+} \).

The famous Kneser--Tits problem asks conditions for triviality of
\( \KFunc_1^G(K) = G(K) / \Elem_G(K) \) if
\( K \) is a field and
\( G \) is simple, simply connected, and isotropic. The positive answer is known in a number of cases, though in general there are counterexamples. See \cite{kne-tit}, especially theorem 6.1 for the list of Tits indices with trivial
\( \KFunc_1^G(K) \) over all fields, and separate papers
\cite{tit-wei-acp, tit-wei-tha} for the case
\( \TitsIndex{E}{}{8}{2}{78} \). Later Ph.~Gille and A.~Stavrova proved
\cite{k1-red-tri}[corollary 6.11] that
\( \KFunc_1^G(K) = 1 \) if
\( G \) is simple simply connected of isotropic rank at least
\( 2 \),
\( K \) contains a field, and one of the following situations holds.
\begin{itemize}
	
	\item
	The Tits index is
	\( \TitsIndex{A}{1}{n}{r}{ (d) } \), where
	\( r \geq 2 \),
	\( d \geq 1 \) is squarefree, and
	\( d (r + 1) = n + 1 \).
	
	\item
	The Tits index is
	\( \TitsIndex{A}{2}{n}{r}{ (d) } \), where
	\( r \geq 2 \),
	\( 1 \leq d \leq 3 \),
	\( d \mid n + 1 \),
	\( 2 r d \leq n + 1 \leq 2 r d + 3 \).
	
	\item
	The absolute root system is
	\( \RootSys{B}{\ell} \) or
	\( \RootSys{C}{\ell} \).
	
	\item
	The Tits index is
	\( \TitsIndex{D}{1}{n}{r}{ (d) } \), where
	\( r \geq 2 \),
	\( d \in \{ 1, 2, 4 \} \),
	\( \max(r d, 3) \leq n \),
	\( r d \neq n - 1 \), and
	\( n \) is even for
	\( d = 4 \).
	
	\item
	The Tits index is
	\( \TitsIndex{D}{2}{n}{r}{ (d) } \), where
	\( r \geq 2 \),
	\( d \in \{ 1, 2, 4 \} \),
	\( r d \leq n - 1 \), and
	\( n \) is even for
	\( d = 4 \).
	
	\item
	The Tits index is exceptional (including
	\( \TitsIndex{D}{3}{4}{2}{2} \) and
	\( \TitsIndex{D}{6}{4}{2}{2} \)) and distinct from
	\( \TitsIndex{E}{}{7}{2}{31} \).
	
\end{itemize}

In this paper we prove the following
\begin{theorem*}
	Let
	\( K \) be a semilocal commutative unital ring with connected spectrum and
	\( G \) be a simple reductive group scheme over
	\( K \). Suppose that the isotropic rank of
	\( G \) is at least
	\( 2 \) and in the cases of Tits indices
	\( \TitsIndex{E}{2}{6}{2}{16''} \) and
	\( \TitsIndex{E}{}{8}{2}{78} \) the base ring
	\( K \) contains a field. Then the group
	\( \KFunc_1^G(K) \) is solvable.
\end{theorem*}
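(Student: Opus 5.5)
We would prove solvability by putting a finite filtration on \( G(K) \) over \( \Elem_G(K) \) whose successive quotients are abelian, using the semilocal structure of \( K \). Let \( \Jac = \Jac(K) \) be the Jacobson radical, so \( \overline K = K / \Jac \) is a finite product of fields \( k_1 \times \dots \times k_m \). Reduction modulo \( \Jac \) gives
\[
	1 \to G(K, \Jac) \to G(K) \to G(\overline K)
	\quad \text{and} \quad
	\KFunc_1^G(K) \to \KFunc_1^G(\overline K) = \prod_i \KFunc_1^{G}(k_i).
\]
Here \( G(K,\Jac) \) is the principal congruence subgroup. Since \( \Elem_G(K) \) surjects onto \( \Elem_G(\overline K) \) (unipotent radicals are smooth, so points lift over a henselian-free surjection of affine spaces), we get an exact sequence
\[
	G(K,\Jac)\Elem_G(K)/\Elem_G(K) \to \KFunc_1^G(K) \to \KFunc_1^G(\overline K).
\]
It therefore suffices to show two things: the right-hand term is solvable, and the image of \( G(K,\Jac) \) is solvable, ideally abelian or even trivial.

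\textbf{Field part.} The isotropic rank over each \( k_i \) is at least that over \( K \), hence at least \( 2 \). First reduce to the simply connected cover \( \tilde G \to G \). The map \( \tilde G(k)\to G(k) \) has cokernel embedding into \( H^1_{\fppf}(k, \mu) \) with \( \mu \) central, which is abelian. Its image contains \( \Elem_G(k) \), since elementary subgroups lift isomorphically. Thus \( \KFunc_1^G(k) \) is an extension of an abelian group by a quotient of \( \KFunc_1^{\tilde G}(k) \). For simply connected absolutely almost simple \( \tilde G \) of rank at least \( 2 \) over a field, \( \KFunc_1^{\tilde G}(k) \) is abelian: Whitehead groups in this range are abelian by Margaux/Gille-type results (\( \KFunc_1 \) is then a quotient of \( \KFunc_2 \)-type central data), or they are trivial by the list in the introduction. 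For a simple but not absolutely simple \( G \), apply Weil restriction along a finite étale extension and reduce to the absolutely simple case. Overall, the field part has derived length at most \( 2 \).

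\textbf{Radical part.} We would show \( G(K,\Jac) \subseteq \Elem_G(K)\cdot Z \) with \( Z \) abelian, for instance the \( K \)-points of a maximal torus \( T \) of a Levi subgroup \( L \) of \( P^{+} \). The argument uses the big cell \( U^- L U^+ \), which is open and contains every element congruent to \( 1 \) modulo \( \Jac \). So any \( g \in G(K,\Jac) \) factors as \( u^- \ell u^+ \), and we only need to handle \( \ell \in L(K,\Jac) \). Since isotropic rank is at least \( 2 \), \( L \) is contained in several distinct maximal Levi subgroups. Write \( \ell \) as a product using relative root subgroups, in the style of Stavrova's normality and Steinberg-relation technique. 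The anisotropic kernel part should then be absorbed into elementaries via commutator identities \( [x_\alpha(a), x_\beta(b)] \) involving \( \ell \). The residual piece lies in the center of \( L \), whose points form an abelian group.

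\textbf{Main obstacle.} We expect the hardest step to be showing that \( L(K,\Jac) \) lies in \( \Elem_G(K)\cdot Z(L)(K) \) when the anisotropic kernel is large. This means showing that conjugation by \( \ell \) on the unipotent radicals can be realized modulo elementaries. We would first try the Gille–Stavrova approach: treat \( K \) as semilocal, use the homotopy invariance \( \ell(t) \) along \( 1+t\Jac \), and apply the ``connected to the identity implies elementary'' principle. The exceptional indices \( \TitsIndex{E}{2}{6}{2}{16''} \) and \( \TitsIndex{E}{}{8}{2}{78} \) would be handled by quoting the field-containing result directly, as the hypothesis allows.
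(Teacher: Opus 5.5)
\paragraph{Verdict: there is a genuine gap.}
Your reduction modulo $\Jac(K)$ is correct as far as it goes. The map $\Elem_G(K)\to\Elem_G(K/\Jac(K))$ is onto, and the kernel of $\KFunc_1^G(K)\to\KFunc_1^G(K/\Jac(K))$ is the image of the congruence subgroup. The big cell reduces that image to the image of $L(K,\Jac(K))$. Since $L(K)/[L,L](K)$ embeds into the points of a torus, this is essentially the congruence subgroup of the anisotropic kernel $[L,L]$.

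But this only relocates the whole difficulty, and for that step you give no argument. Identities between root elements and $\ell$ only show that $\ell$ normalizes $\Elem_G(K)$, or they produce elements already in $L(K)\cap\Elem_G(K)$. Deciding how much of $[L,L](K,\Jac(K))$ lies there, up to a solvable error, is precisely the content of the theorem. So ``absorbing the anisotropic part via commutator identities'' is a restatement, not a proof.

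Your fallback, the Gille--Stavrova homotopy principle, is exactly what is unavailable here. It needs $K$ to contain a field (and uses geometric input), which is the hypothesis the theorem is designed to remove. Over general semilocal rings $\KFunc_1$ is not $\mathbb{A}^1$-invariant. There is not even an $\mathbb{A}^1$-path from $1$ to a given $\ell$ in general: already for $\mathbb{G}_m$, an element $1+j$ with $j\in\Jac(K)$ non-nilpotent is not $f(1)$ for any $f\in K[t]^*$ with $f(0)=1$.

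Your intermediate target $L(K,\Jac(K))\subseteq\Elem_G(K)\,Z(L)(K)$ is also stronger than needed and is not known in general. The field part rests on an unspecified ``Margaux/Gille-type'' abelianness of Whitehead groups for all indices; this is a lesser issue.

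\paragraph{What the paper does instead.}
The missing ingredient is a mechanism that controls the anisotropic kernel over $K$ itself, not just over its residue fields. The paper never separates the radical from the residue fields.
\begin{enumerate}
\item It passes to an isogenous group via the braided crossed module of lemma \ref{sol-iso}.
\item Theorem \ref{twi-iso-cla} realizes every classical isotropic group as $\GenLin(n+1,S)$ over an Azumaya algebra $S$, or as an odd unitary group $\Unit(2r,M)$.
\item Stability over semilocal rings gives injectivity into the stable group (for $n\geq 2$, resp.\ $r\geq 2$).
\item The stable $\KFunc_1$ and $\KUnit_1$ are abelian by the Whitehead lemma; in the unitary case this uses that $M\perp(-M)$ is hyperbolic, together with Witt cancellation.
\end{enumerate}
Steps 3 and 4 handle the congruence part and the residue-field part in one stroke.

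For exceptional indices, $G(K)=\Elem_G(K)\,L(K)$ reduces everything to $[L,L](K)$. This group is embedded into a classical subsystem subgroup $H$. Sometimes $H$ has relative rank $1$, which needs the extra lemmas \ref{ort-ab-k1} and \ref{2a-ab-k1}, and the case $\TitsIndex{E}{}{7}{2}{31}$ needs the chain $H_1,H_2,H_3$. Only $\TitsIndex{E}{2}{6}{2}{16''}$ and $\TitsIndex{E}{}{8}{2}{78}$ are quoted from the field-containing result.
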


Of course, the two exceptional cases are covered by
\cite{k1-red-tri}. For other cases from the above list we essentially remove the condition of
\( K \) to contain a field and give a direct elementary proof, but our conclusion of solvability  is weaker than triviality.

In a sequel paper we are going to prove solvability of locally isotropic
\( \KFunc_1 \) functor over finite dimensional rings. The proof of such solvability relies on the main result from this paper and certain localization technique, quite different from methods of this paper. Over arbitrary commutative rings locally isotropic
\( \KFunc_1 \) is constructed only for local isotropic rank at least
\( 2 \) in
\cite{ele-loc} (and the elementary subgroup of
\( \GenLin(1, K) \) is not necessarily normal), so our ultimate goal needs only group schemes of isotropic rank at least
\( 2 \) in the case of semilocal rings. During the proof below we also show that
\( \KFunc_1^G(K) \) is solvable for isotropic rank
\( 1 \) in a minor number of cases.

For classical Tits indices our proof relies on stability of
\( \KFunc_1 \) and basic properties of odd unitary groups. The whole argument seems to be well known to the experts, but we are unable to find a reference in the literature. The cases of exceptional Tits indices are reduced to the classical ones, sometimes of relative rank
\( 1 \). Such reduction is non-trivial for
\( \TitsIndex{E}{}{7}{2}{31} \). For appearing classical groups of isotropic rank
\( 1 \) we have to provide additional arguments not following from the general theory of odd unitary groups.

In this paper we do not try to find explicit bounds on the derived length of
\( \KFunc_1^G(K) \). Such bounds can be easily extracted from the proof. In particular, the derived length for all Tits indices from the main theorem is bounded by a common absolute constant.

The paper is organized as follows. In \S 2 we recall definitions of the isotropic rank and the elementary subgroup of an isotropic reductive group. We also prove in theorem
\ref{perfect} perfectness of the derived subgroup
\( [ \Elem_G(K), \Elem_G(K) ] \) for simple group scheme
\( G \) of isotropic rank at least
\( 2 \) over connected semilocal ring
\( K \) (even for Chevalley groups of types
\( \mathsf{B}_2 \) and
\( \mathsf{G}_2 \)). In \S 3 we collect various definitions of classical groups, they are needed to prove solvability of
\( \KFunc_1 \) for classical Tits indices. Theorem
\ref{twi-iso-cla} contains explicit construction of all isotropic simple group schemes (up to isogeny) over connected semilocal rings with classical Tits indices in terms of odd unitary groups. The last section \S 4 contains the proof of our main result, theorem \ref{sol-sem-iso}, and necessary preliminary lemmas.

The author wants to thank Anastasia Stavrova for giving motivation to work on this problem.

\section{Isotropic reductive groups}

In this paper we fix a commutative unital ring
\( K \), usually assumed to be semilocal. The term
\textit{root system} means a crystallographic possibly non-reduced root system, i.e.\ it can have components of type
\( \RootSys{BC}{\ell} \) for
\( \ell \geq 1 \). We say that roots of the largest length
\( \lambda \) in a given component are
\textit{long}, roots with length strictly between
\( \lambda / 2 \) and
\( \lambda \) are
\textit{short}, and for a component of type
\( \RootSys{BC}{\ell} \) roots with length
\( \lambda / 2 \) are
\textit{ultrashort}. So in the case of
\( \RootSys{BC}{1} \) there are only two long roots and two ultrashort ones. A basis of a root system
\( \Phi \) is usually denoted by
\( \Delta \) (it consists of short and ultrashort roots for components of type
\( \RootSys{BC}{\ell} \)), and
\( \Phi^{+} \),
\( \Phi^{-} \) are the sets of positive and negative roots with respect to a basis
\( \Delta \).

Now let
\( G \) be a reductive group scheme over
\( K \) in the sense of
\cite{red-grp-sch}, i.e.\ a smooth affine group schemes such that its geometric fibers
\( G_{ \overline{ \kappa( \mathfrak{p} ) } } \) are connected reductive algebraic groups over the algebraic closures
\( \overline{ \kappa( \mathfrak{p} ) } \) of all residue fields. We call
\( G \)
\textit{simple} if it is a twisted form of a Chevalley--Demazure group scheme
\( \ChevGrp^\Lambda_K( \widetilde{\Phi}, {-} ) \) for an irreducible
\textit{absolute root system}
\( \widetilde{\Phi} \) and some choice of a weight lattice
\( \Lambda \). Over fields such group schemes are often called absolutely almost simple.

Suppose that
\( K \) is semilocal with connected spectrum. Then
\( G \) has a maximal split torus and all such tori are conjugate by the group
\( G(K) \)
\cite[Exp. XXVI, proposition 6.16]{red-grp-sch}. Choose such a torus
\( T \leq G \). The weight decomposition of the Lie algebra
\( \liealg \) of
\( G \) with respect to the action of
\( T \) has the form
\[
	\liealg
	=
	\liealg_0
	\oplus
	\bigoplus_{ \alpha \in \Phi }
		\liealg_\alpha,
\]
where
\( \Phi \) is a (possibly non-reduced) root system called the
\textit{relative root system} in the character lattice of
\( T \) and all weight subspaces are non-zero free
\( K \)-modules. The rank of
\( \Phi \) is called the
\textit{isotropic rank} of
\( G \). There exists a faithfully flat extension
\( K \subseteq \widetilde{K} \) such that there is a maximal torus
\(
	T_{ \widetilde{K} }
	\leq
	\widetilde{T}
	\leq
	G_{ \widetilde{K} }
\) and
\( G_{ \widetilde{K} } \) has a splitting with the maximal torus
\( \widetilde{T} \). Without loss of generality
\( \widetilde{K} \) is also semilocal (possibly with disconnected spectrum), so there is the
\textit{absolute root system}
\( \widetilde{\Phi} \) in the character lattice of
\( \widetilde{T} \). The inclusion
\( T_{ \widetilde{K} } \to \widetilde{T} \) induces a map
\[
	u
	\colon
	\widetilde{\Phi} \sqcup \{ 0 \}
	\to
	\Phi \sqcup \{ 0 \}.
\]

This map
\( u \) is independent of the choices of
\( \widetilde{K} \) and
\( \widetilde{T} \) up to a linear isomorphism between the character lattices of the split tori. It is easy to see that there are canonical group subschemes
\( L \leq G \) and
\( U_\alpha \leq G \) for
\( \alpha \in \Phi \) such that
\begin{align*}
		L_{ \widetilde{K} }
		&=
		\bigl\langle
			\widetilde{T},
			\widetilde{U}_\beta
			\mid
			u(\beta) = 0
		\bigr\rangle,
	&
		( U_\alpha )_{ \widetilde{K} }
		&=
		\bigl\langle
			\widetilde{U}_\beta
			\mid
			u(\beta) \in \{ \alpha, 2 \alpha \}
		\bigr\rangle
\end{align*}
as fppf sheaves, where
\( \widetilde{U}_\beta \leq G_{ \widetilde{K} } \) are root subgroups associated with
\( \widetilde{T} \). In particular,
\( \liealg_0 \) is the Lie algebra of
\( L \),
\( \liealg_\alpha \) is the Lie algebra of
\( U_\alpha \) for non-ultrashort
\( \alpha \), and
\( \liealg_\alpha \oplus \liealg_{ 2 \alpha } \) is the Lie algebra of
\( U_\alpha \) otherwise. Actually,
\( L \) is the scheme centralizer of
\( T \), as a group scheme it is reductive.

Applying
\cite[Exp. XXVI, corollary 5.2]{red-grp-sch} (possibly to a subsystem reductive group subscheme of
\( G \)) we see that for every root
\( \alpha \) there exists an
\textit{%
	\( \alpha \)-Weyl element%
}, i.e.\ an element
\(
	w
	\in
	U_\alpha(K)\, U_{- \alpha}(K)\, U_\alpha(K)
\) such that
\begin{align*}
		\up{w}{L} &= L,
	&
		\up{w}{U_\beta} = U_{ s_\alpha(\beta) },
\end{align*}
where
\(
	s_\alpha(\beta)
	=
	\beta
	-
	2
	\frac{ ( \alpha, \beta ) }{ ( \alpha, \alpha ) }
	\alpha
\) is the reflection in the orthogonal complement of
\( \alpha \).

There is a complete description of all possible maps
\( u \colon \widetilde{\Phi} \to \Phi \) using Tits indices
\cite{tits-ind}. Namely, a Tits index
\( \TitsIndex{X}{s}{n}{r}{d} \) consists of the type
\( \RootSys{X}{n} \) of the absolute root system
\( \widetilde{\Phi} \), the rank
\( r \) of the relative root system
\( \Phi \), and two additional parameters
\( s \) and
\( d \). If
\( K \subseteq \widetilde{K} \) is a splitting extension for
\( G \) as above, then
\( G \) is determined by the descent data on the Chevalley group scheme
\(
	\ChevGrp_{ \widetilde{K} }^\Lambda(
		\widetilde{\Phi},
		{-}
	)
	\cong
	G_{ \widetilde{K} }
\), i.e.\ by an automorphism
\( \psi \) of
\(
	\ChevGrp_{
		\widetilde{K} \otimes_K \widetilde{K}
	}^\Lambda(
		\widetilde{\Phi},
		{-}
	)
\) satisfying the cocycle condition. This automorphism preserves all relative root subgroups
\(
	( U_\alpha )_{
		\widetilde{K} \otimes_K \widetilde{K}
	}
\) and the group subscheme
\( L_{ \widetilde{K} \otimes_K \widetilde{K} } \), so it is a
\( ( \widetilde{K} \otimes_K \widetilde{K} ) \)-point of the group subscheme
\[
		L_{ \widetilde{K} }
		\rtimes
		\Out( \widetilde{\Phi} )
	\leq
		\Aut\bigl(
			\ChevGrp^\Lambda_{ \widetilde{K} }(
				\widetilde{\Phi},
				{-}
			)
		\bigr)
	\cong
		\ChevGrp^{ \mathrm{ad} }_{ \widetilde{K} }(
			\widetilde{\Phi},
			{-}
		)
		\rtimes
		\Out( \widetilde{\Phi} ).
\]
The parameter
\( s \) denotes the size of the smallest subgroup
\( H \leq \Out(\widetilde{\Phi}) \) such that
\( \psi \) lies in
\( L_{ \widetilde{K} } \rtimes H \). The last parameter
\( d \) denotes the rank of an associated Azumaya algebra for classical Tits indices (and it is written inside brackets) and the dimension of the ``anisotropic kernel'' for exceptional Tits indices. Sometimes different Tits indices correspond to isomorphic maps
\( u \), a complete list of such coincidences can be found in
\cite[\S 2]{dio-pro}.

Now let
\(
	U^{\pm}
	=
	\langle U_\alpha \mid \alpha \in \Phi^{\pm} \rangle
\) be two unipotent group subschemes of
\( G \). They are the unipotent radicals of opposite minimal parabolic subgroups
\( P^{\pm} = U^{\pm} \rtimes L \) and
\( L \) is the common Levi subgroup of
\( P^{\pm} \) by
\cite[Exp. XXVI, proposition 6.16]{red-grp-sch}. Moreover, the group
\( G(K) \) admits the
\textit{Gauss decomposition}
\[
	G(K)
	=
	U^{+}(K)\, U^{-}(K)\, U^{+}(K)\, L(K)
\]
by
\cite[Exp. XXVI, corollary 5.2]{red-grp-sch}. It follows that the
\textit{elementary subgroup}
\(
	\Elem_G(K)
	=
	\langle U_\alpha(K) \mid \alpha \in \Phi \rangle
\) is normal in
\( G(K) \) and independent of the choice of
\( T \). The
\( \KFunc_1 \)-functor is
\[ \KFunc_1^G(K) = G(K) / \Elem_G(K). \]

\begin{theorem}
	\label{perfect}
	Let
	\( G \) be a simple group scheme over a commutative semilocal ring
	\( K \) of isotropic rank at least
	\( 2 \). Then the derived subgroup
	\( [ \Elem_G(K), \Elem_G(K) ] \) is perfect. If the Tits index is neither
	\(
		\TitsIndex{B}{}{2}{2}{}
		\cong
		\TitsIndex{C}{}{2}{2}{ (1) }
	\) nor
	\( \TitsIndex{G}{}{2}{2}{0} \), then
	\( \Elem_G(K) \) itself is perfect.
\end{theorem}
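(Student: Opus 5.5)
The plan is to show that every relative root subgroup \( U_\alpha(K) \) lies in the commutator subgroup, or in its double commutator in the exceptional rank-two cases. The tools are the commutator relations between relative root subgroups and the action of the torus \( T(K) \), or more generally of \( L(K) \), on them. Since \( \Elem_G(K) \) is generated by the \( U_\alpha(K) \), establishing \( U_\alpha(K) \leq [\Elem_G(K), \Elem_G(K)] \) for all \( \alpha \in \Phi \) proves perfectness.

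First I reduce to one root of each length class, using the \( \alpha \)-Weyl elements recalled above. Such an element \( w \) lies in \( \Elem_G(K) \) and satisfies \( \up{w}{U_\beta} = U_{s_\alpha(\beta)} \). Since the derived subgroup is normal, it suffices to treat one root in each Weyl orbit, i.e.\ one long, one short and one ultrashort root, whichever occur. Because \( \Phi \) is irreducible of rank at least \( 2 \), each such root \( \gamma \) can be written as \( \gamma = \alpha + \beta \) (or \( \gamma = i\alpha + j\beta \)) with \( \alpha, \beta \in \Phi \) non-proportional. The generalized Chevalley commutator formula then expresses \( [U_\alpha(K), U_\beta(K)] \) as a product of elements of \( U_{i\alpha + j\beta}(K) \).

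The key step is to show that this commutator map is surjective enough onto \( U_\gamma(K) \), modulo the higher terms. I would argue this over the splitting extension \( \widetilde{K} \). There the relative commutator map is given by the structure constants of the absolute root system, restricted to the descended modules \( \liealg_\alpha \). Since \( K \) is semilocal, it suffices to check surjectivity of the bilinear (or quadratic) commutator map on the fibers at the residue fields, and then lift by Nakayama's lemma. Over a field this is the classical fact, used by Tits, that \( U_\gamma \) is generated by commutators. The higher terms lie in root subgroups of larger height, so I handle them by descending induction on height within a positive system.

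The obstacle is the residue fields where the structure constants vanish: residue field \( \Field_2 \) for doubly laced systems and \( \Field_3 \) for \( \RootSys{G}{2} \). In these cases the commutators only reach the long roots. When the relative system is \( \RootSys{B}{2} \), \( \RootSys{BC}{2} \) or \( \RootSys{G}{2} \) but the group is not split (\( d > 0 \), or a nontrivial anisotropic kernel), I would use two further ingredients. The first is the conjugation action of \( L(K) \) and of \( T(K) \), writing \( u = t u' t^{-1} u'^{-1} \) for \( u \in U_\alpha \). The second is the nonlinear parts of the root subgroups. Together these generate the short root subgroups by commutators, because over the anisotropic kernel there are enough non-trivial scalars. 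This leaves exactly the split \( \RootSys{B}{2} \) and \( \RootSys{G}{2} \) with a residue field \( \Field_2 \) (respectively \( \Field_2, \Field_3 \)) as the exceptions. For these I would prove only that \( [\Elem, \Elem] \) is perfect. Its generators are the long root elements together with commutators of short ones, and each of these is itself a commutator of elements of \( [\Elem, \Elem] \), which I check directly via the Chevalley relations in \( \mathrm{Sp}_4 \) and \( \ChevGrp(\RootSys{G}{2}) \). Equivalently, I would show that \( \Elem / [\Elem, \Elem] \) is abelian of exponent \( 2 \) or \( 3 \) and that its kernel is generated by commutators of kernel elements. This split small-residue-field analysis is the step I expect to be the most delicate.
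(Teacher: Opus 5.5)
Your treatment of the split \( \RootSys{B}{2} \) and \( \RootSys{G}{2} \) cases has a genuine gap. This is the only part the paper proves itself; the second claim is quoted from the literature. The gap starts from a false description of \( [ \Elem_G(K), \Elem_G(K) ] \).

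For \( \RootSys{B}{2} \), the long root elements are \emph{not} in the derived subgroup once \( K \) has a residue field \( \Field_2 \). For \( G = \mathrm{Sp}_4 \) over \( \Field_2 \) one has \( \Elem_G(\Field_2) \cong \Sym_6 \), whose derived subgroup \( \mathrm{A}_6 \) has index \( 2 \). The long root elements are the symplectic transvections. These are all conjugate and generate the group, hence they are odd. Reducing modulo a maximal ideal transfers this to \( K \).

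What the commutator formula actually produces are \( t_\beta(2y) \) and the mixed products \( t_\alpha(xy)\, t_\beta(xy^2) \) with \( \alpha \) short and \( \beta \) long at \( 45^\circ \). For \( \RootSys{G}{2} \) it produces all long root elements, \( t_\alpha(2x) \), and \( t_\alpha(xy)\, t_\beta(xy^2) \) for short \( \alpha, \beta \) at \( 60^\circ \). These are exactly the generators of the paper's subgroup \( \Elem'_G(K) \).

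The more serious omission concerns opposite roots. \( [ \Elem_G(K), \Elem_G(K) ] \) is the \emph{normal closure} of all \( [ t_\gamma(x), t_\delta(y) ] \), including \( \delta = - \gamma \). Commutators of opposite root elements are not governed by the Chevalley commutator formula, so ``check directly via the Chevalley relations'' does not reach them.

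The missing idea is the paper's explicit identities. They write \( [ t_{- \alpha}(x), t_\alpha(y) ] \) and \( [ t_{- \beta}(x), t_\beta(y) ] \) as products of conjugates of commutators that lie in \( \Elem'_G(K) \). The middle factor is a commutator of two generators, \( t_{- \alpha}(x)\, t_{ \beta - 2 \alpha }(x) \) and \( t_\alpha(y)\, t_\beta(y) \). These identities show that \( \Elem'_G(K) \) is normalized by \( \Elem_G(K) \) and contains all normal generators, so \( \Elem'_G(K) = [ \Elem_G(K), \Elem_G(K) ] \). Perfectness of \( \Elem'_G(K) \) is then visible on its generators. Your ``equivalent'' reformulation, that the kernel is generated by commutators of kernel elements, only restates the claim.

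Your sketch of the second claim also has a logical flaw. Writing \( u = t u' t^{- 1} u'^{- 1} \) with \( t \in T(K) \) or \( t \in L(K) \) only shows \( u \in [ G(K), \Elem_G(K) ] \). Neither \( L(K) \) nor, for non-simply connected \( G \), the split torus need lie in \( \Elem_G(K) \). Conjugation by \( L(K) \) preserves \( [ \Elem_G(K), \Elem_G(K) ] \) but cannot produce new elements of it. You would have to restrict to \( L(K) \cap \Elem_G(K) \), for instance products of Weyl elements. Moreover, at a residue field \( \Field_2 \) every unit is congruent to \( 1 \), so torus elements never help there.

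In the non-split rank-two cases, perfectness comes from the root modules being larger. For example, for \( \TitsIndex{B}{}{n}{2}{} \) with \( n \geq 3 \) the pairing \( B(M, M) \) into the long root groups is all of \( K \). This has to be argued case by case rather than by ``enough non-trivial scalars''; the paper avoids the issue by quoting the result.

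Finally, \( \Field_3 \) is not an obstruction for \( \RootSys{G}{2} \). The constant \( 3 \) only appears in long root components, and long root elements are commutators inside the long \( \RootSys{A}{2} \) subsystem.
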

\begin{proof}
	The second claim is
	\cite[theorem 4]{ele-loc}. To prove the first case we only have to consider the Chevalley group schemes
	\( \ChevGrp^\Lambda( \RootSys{B}{2}, {-} ) \) and
	\( \ChevGrp^\Lambda( \RootSys{G}{2}, {-} ) \).
	
	\begin{figure}[ht]
		\centering
		\begin{tikzpicture}
				\foreach \i in { 0, 1, ..., 3 } {
					\draw[->]
						( 0, 0 )
						--
						( \i * 90 : { 2 / sqrt(2) } );
					\draw[->]
						( 0, 0 )
						--
						( 45 + \i * 90 : 2 );
				}
				\node
					[ right ]
					at ( 0 : { 2 / sqrt(2) } )
					{
						\( \alpha \)
					};
				\node
					[ left ]
					at ( 180 : { 2 / sqrt(2) } )
					{
						\( - \alpha \)
					};
				\node
					[ below ]
					at ( 270 : { 2 / sqrt(2) } )
					{
						\( \alpha - \beta \)
					};
				\node
					[ above right, inner sep = .2em ]
					at ( 45 : 2 )
					{
						\( \beta \)
					};
				\node
					[ above left, inner sep = .2em ]
					at ( 135 : 2 )
					{
						\( \beta - 2 \alpha \)
					};
				\node
					[ below left, inner sep = .2em ]
					at ( 225 : 2 )
					{
						\( - \beta \)
					};
				\tikzset{ shift = { ( 5, 0 ) } }
				\foreach \i in { 0, 1, ..., 5 } {
					\draw[->]
						( 0, 0 )
						--
						( \i * 60 : { 2 / sqrt(3) } );
					\draw[->]
						( 0, 0 )
						--
						( 30 + \i * 60 : 2 );
				}
				\node
					[ right ]
					at ( 0 : { 2 / sqrt(3) } )
					{
						\( \alpha \)
					};
				\node
					[ above right, inner sep = .2em ]
					at ( 60 : { 2 / sqrt(3) } )
					{
						\( \beta \)
					};
				\node
					[ above left, inner sep = .2em ]
					at ( 120 : { 2 / sqrt(3) } )
					{
						\( \beta - \alpha \!\!\! \!\!\! \)
					};
				\node
					[ left ]
					at ( 180 : { 2 / sqrt(3) } )
					{
						\( - \alpha \)
					};
		\end{tikzpicture}
		\caption{%
			Root systems
			\( \RootSys{B}{2} \) and
			\( \RootSys{G}{2} \)%
		}
	\end{figure}
	
	In the first case let
	\( \Elem'_G(K) \leq G(K) \) be the subgroup generated by
	\( t_\alpha(x y)\, t_\beta(x y^2) \) and
	\( t_\beta(2 y) \) for
	\( x, y \in K \),
	\( \angle(\alpha, \beta) = 45^\circ \),
	\( \alpha \) is short, and
	\( \beta \) is long. Clearly,
	\[
		\Elem'_G(K)
		=
		[ \Elem'_G(K), \Elem'_G(K) ]
		\leq
		[ \Elem_G(K), \Elem_G(K) ].
	\]
	On the other hand, we have
	\begin{align*}
			[ t_{ - \alpha }(x), t_\alpha(y) ]
			&=
			\up{ t_{ - \alpha }(x) }{
				\bigl[
						t_\alpha(y)\,
						t_\beta(y)
					,
						t_{ \beta - 2 \alpha }(x)
				\bigr]
			}\,
			\bigl[
					t_{ - \alpha }(x)\,
					t_{ \beta - 2 \alpha }(x)
				,
					t_\alpha(y)\,
					t_\beta(y)
			\bigr]\,
			\up{ t_\alpha(y) }{
				\bigl[
					t_\beta(y),
					t_{ - \alpha }(x)
				\bigr]
			},
		\\
			[ t_{ - \beta }(x), t_\beta(y) ]
			&=
			\up{ t_{ - \beta }(x) }{
				\bigl[
					t_\beta(y),
					t_{ \alpha - \beta }(x)
				\bigr]
			}\,
			\bigl[
					t_{ - \beta }(x)\,
					t_{ \alpha - \beta }(x)
				,
					t_\beta(y)\,
					t_\alpha(y)
			\bigr]\,
			\up{ t_\beta(y) }{
				\bigl[
						t_\alpha(y)
					,
						t_{ - \beta }(x)\,
						t_{ \alpha - \beta }(x)
				\bigr]
			}
	\end{align*}
	with
	\( \alpha \) and
	\( \beta \) as above. It easily follows that
	\( \Elem'_G(K) \) is normalized by
	\( \Elem_G(K) \) and all generators
	\( [ t_\gamma(x), t_\delta(y) ] \) of
	\( [ \Elem_G(K), \Elem_G(K) ] \) as a normal subgroup of
	\( \Elem_G(K) \) lie in
	\( \Elem'_G(K) \). Therefore
	\( \Elem'_G(K) = [ \Elem_G(K), \Elem_G(K) ] \).
	
	In the second case we similarly define
	\( \Elem'_G(K) \) as the subgroup generated by
	\( t_\alpha(2 x) \),
	\( t_\alpha(x y)\, t_\beta(x y^2) \) for short roots
	\( \alpha \) and
	\( \beta \) with
	\( \angle(\alpha, \beta) = 60^\circ \) and by
	\( t_\gamma(z) \) for long
	\( \gamma \). The corresponding calculation is
	\[
		[ t_{ - \alpha }(x), t_\alpha(y) ]
		=
		\up{ t_{ - \alpha }(x) }{
			\bigl[
					t_\alpha(y)\,
					t_\beta(y)
				,
					t_{ \beta - \alpha }(x)
			\bigr]
		}\,
		\bigl[
				t_{ - \alpha }(x)\,
				t_{ \beta - \alpha }(x)
			,
				t_\alpha(y)\,
				t_\beta(y)
		\bigr]\,
		\up{ t_\alpha(y) }{
			\bigl[
				t_\beta(y),
				t_{ - \alpha }(x)
			\bigr]
		}
	\]
	for short roots
	\( \alpha \),
	\( \beta \) at the angle
	\( 60^\circ \).
\end{proof}

If
\( C \leq G \) is a central group subscheme of multiplicative type, then
\( P \mapsto P / C \) is a one-to-one correspondence between parabolic subgroups of
\( G \) and
\( G / C \). It follows that the isotropic ranks of
\( G \) and
\( G / C \) coincide and
\( \Elem_{ G / C }(K) \leq ( G / C )(K) \) is the image of
\( \Elem_G(K) \leq G(K) \).

The following lemma allows us to consider simple group schemes up to isogeny in proving solvability of
\( \KFunc_1^G(K) \). Recall that a
\textit{braided crossed module} consists of a group homomorphism
\( d \colon X \to H \), an action of
\( H \) on
\( X \) by group automorphisms, and a map
\( \langle {-}, {=} \rangle \colon H \times H \to X \) (a
\textit{crossed pairing}) such that
\begin{align*}
		d( \up{h}{x} ) &= \up{h}{ d(x) },
	&
		\langle h h', h'' \rangle
		&=
		\up{h}{ \langle h', h'' \rangle }\,
		\langle h, h'' \rangle,
	\\
		\up{x}{y} &= \up{ d(x) }{y},
	&
		\langle h, h' h'' \rangle
		&=
		\langle h, h' \rangle\,
		\up{h'}{ \langle h, h'' \rangle },
	\\
		d\bigl( \langle h, h' \rangle \bigr)
		&= 
		[h, h'],
	&
		\langle h, d(x) \rangle
		&=
		\up{h}{x}\, x^{- 1},
	\\
		\up{h}{ \langle h', h'' \rangle }
		&=
		\bigl\langle
			\up{h}{h'},
			\up{h}{h''}
		\bigr\rangle,
	&
		\langle d(x), h \rangle
		&=
		x\, ( \up{h}{x} )^{- 1}
\end{align*}
for
\( x, y \in X \) and
\( h, h', h'' \in H \).

\begin{lemma}
	\label{sol-iso}
	Let
	\( G \) be a reductive group scheme over arbitrary
	\( K \) of local isotropic rank at least
	\( 2 \). Let also
	\( C \leq G \) be a central group subscheme of multiplicative type. Then
	\[
		\KFunc_1^G(K) \to \KFunc_1^{ G / C }(K)
	\]
	is a braided crossed module in a canonical way. In particular,
	the derived subgroup
	\(
		\bigl[
			\KFunc_1^{ G / C }(K),
			\KFunc_1^{ G / C }(K)
		\bigr]
	\) contains the image of
	\( \KFunc_1^G(K) \) and the kernel of the homomorphism
	\( \KFunc_1^G(K) \to \KFunc_1^{ G / C }(K) \) is central in
	\( \KFunc_1^G(K) \).
\end{lemma}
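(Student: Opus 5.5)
The plan is to build the crossed module from the commutator map on $G/C$, which lifts canonically to $G$ because $C$ is central. For $h, h' \in (G/C)(K)$, choose lifts $\tilde h, \tilde h'$ fppf-locally, say over a faithfully flat $K \to K'$ on which they exist in $G(K')$. The commutator $[\tilde h, \tilde h']$ does not depend on the choice of lifts, since $C$ is central. So by fppf descent it defines a scheme morphism $c \colon G/C \times G/C \to G$, and $c(h,h') \in G(K)$. Similarly, the conjugation action of $G/C$ on $G$ is well defined, because inner automorphisms by elements of $C$ are trivial. Set
\[
	\langle h, h' \rangle = c(h,h')\,\Elem_G(K).
\]
The action of $(G/C)(K)$ on $G(K)$ preserves $\Elem_G(K)$: conjugation by $h$ maps the parabolic subgroups $P^{\pm}$ of $G$ to other parabolics, and $\Elem_G(K)$ is independent of the choice. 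Hence it descends to an action on $\KFunc_1^G(K)$. The map $d$ is the natural homomorphism induced by $G \to G/C$, using that $\Elem_{G/C}(K)$ is the image of $\Elem_G(K)$.

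All axioms of a braided crossed module hold already at the level of $G(K) \to (G/C)(K)$ with pairing $c$. They are the usual commutator identities $[ab,c] = \up{a}{[b,c]}[a,c]$ and so on, checked after lifting fppf-locally. Since everything is natural, they persist after quotienting, provided $c$ is well defined modulo elementary subgroups. The key point is therefore:
\[
	c(h,h') \in \Elem_G(K) \quad\text{whenever } h \in \Elem_{G/C}(K).
\]
Symmetrically, the same must hold when $h' \in \Elem_{G/C}(K)$. Given this, $c$ induces a map on $\KFunc_1^{G/C}(K) \times \KFunc_1^{G/C}(K)$, and the action of $\Elem_{G/C}(K)$ on $\KFunc_1^G(K)$ is trivial.

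To prove the key point I use bilinearity. Because $\langle hh', h'' \rangle = \up{h}{\langle h', h''\rangle}\,\langle h,h''\rangle$ and the action preserves $\Elem_G(K)$, it suffices to treat $h$ ranging over generators of $\Elem_{G/C}(K)$. These generators are images of root elements $x \in U_\alpha(K)$, and such an $x$ lifts to $G(K)$ itself, since $U_\alpha \to U_\alpha/(C\cap U_\alpha) = U_\alpha$ is an isomorphism. Then $c(h,h') = x\,\up{h'}{x^{-1}}$. Here $\up{h'}{x}$ is a $K$-point of the unipotent radical of a conjugate parabolic, which lies in $\Elem_G(K)$ by normality and independence. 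This step is where I need isotropic rank at least $2$ over arbitrary $K$ ("local isotropic rank"). I would invoke the cited result \cite{ele-loc} that $\Elem_G$ is normal and that $\Elem_G(K)$ is stable under $\Aut$-type actions, in particular conjugation by $(G/C)(K)$, i.e.\ by $G(K')$ elements that descend. I expect this to be the main obstacle: normality of $\Elem_G(K)$ under the larger group $(G/C)(K)$ is not automatic over a general ring. I would try to deduce it from the normality theorem applied to the adjoint action on $G$, or by running the normality argument inside $G \rtimes$ something.

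The consequences then follow formally. First, $d\langle h,h'\rangle = [h,h']$ and $\langle d(x),h\rangle = x\,(\up{h}{x})^{-1}$. Second, $d(x) = [\,\cdot\,]$-type expressions show that the image of $d$ lies in the derived subgroup; to get this I use $\KFunc_1^G(K) = G(K)/\Elem_G(K)$ with the crossed pairing. Third, for $x \in \ker d$ we get $\up{y}{x} = \up{d(y)}{x} = x$, so $\ker d$ is central.

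For the image claim, note that $\langle d(x), h \rangle$ gives a relation but not directly $d(x) \in [\,,\,]$. I would argue that $G(K) = [G(K),G(K)]\,C$-type factorisation is not available. Instead, I would deduce it via $d(x)$ being a product of commutators modulo $\Elem$, using the Gauss decomposition $G(K) = U^+U^-U^+L(K)$. This reduces to $L(K)$, where I would employ the Weyl elements and commutators with them to express elements of the image in the derived subgroup. I regard this as the second delicate point.
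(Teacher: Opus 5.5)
\paragraph{Construction.} Your construction of the braided crossed module is essentially the paper's. Because $C$ is central, the structure exists (uniquely) on $G \to G/C$ at the level of fppf sheaves. One passes to $K$-points, and everything descends to $\KFunc_1$ using only two facts: $\Elem_G(K) \to \Elem_{G/C}(K)$ is surjective, and $\Elem_G(K)$ is invariant under the group $\Aut(G)$ of algebraic automorphisms of $G$.

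Conjugation by an element of $(G/C)(K)$ is such an automorphism. So the ``main obstacle'' you anticipate is exactly this invariance, which the paper simply quotes; no extra normality argument inside $G \rtimes (\ldots)$ is needed. With it, your reduction to root elements is also unnecessary: any $h \in \Elem_{G/C}(K)$ lifts to some $x \in \Elem_G(K)$, and then $c(h, h') = x\, \up{h'}{x^{-1}} \in \Elem_G(K)$ directly.

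A small slip: centrality of $\ker d$ comes from $\up{x}{y} = \up{d(x)}{y} = y$ for $x \in \ker d$. Your chain $\up{y}{x} = \up{d(y)}{x} = x$ presupposes what it is meant to prove.

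\paragraph{The gap: the image claim.} Write $d \colon X = \KFunc_1^G(K) \to H = \KFunc_1^{G/C}(K)$. The axiom $d(\langle h, h' \rangle) = [h, h']$, which you write down yourself, gives $[H, H] \leq \Image(d)$: the image of $\KFunc_1^G(K)$ \emph{contains} the derived subgroup of $\KFunc_1^{G/C}(K)$. The inclusion as printed in the statement is reversed.

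Your ``Second'' point, and the planned argument via the Gauss decomposition and Weyl elements in $L(K)$, aim at the literal inclusion $\Image(d) \leq [H, H]$. That inclusion is false in general, so the plan cannot succeed. For example, take $G = \GenLin_n$ with $n \geq 3$, $C = \mathbb{G}_m$ its center, and $K = \mathbb{Q}$. Then $\KFunc_1^{G/C}(\mathbb{Q}) \cong \mathbb{Q}^* / \mathbb{Q}^{* n}$ (via the determinant) is abelian and non-trivial, and the image of $\KFunc_1^G(\mathbb{Q}) \cong \mathbb{Q}^*$ is all of it. A semisimple example is split $\mathrm{SO}_{2n} \to \mathrm{PSO}_{2n}$ over $\mathbb{Q}$, using the spinor norm.

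What the paper actually needs, when it replaces $G$ by an isogenous group in Theorem~\ref{sol-sem-iso}, is precisely $[H, H] \leq \Image(d)$ together with centrality of $\ker d$. These give that $\KFunc_1^G(K)$ is solvable if and only if $\KFunc_1^{G/C}(K)$ is. Both follow in one line from the axioms, with no further input.
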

\begin{proof}
	Clearly, if
	\( C \leq G \) is a central subgroup of any group, then
	\( G \to G / C \) is a braided crossed module in a unique way. This argument actually works for group objects in any Barr exact category, in particular, in the category of fppf sheaves on the category of affine
	\( K \)-schemes.
	
	Now let
	\( G \) be the group scheme from the statement. We know that
	\( G \to G / C \) is a braided crossed module of affine group schemes in a unique way. Then
	\( G(K) \to (G / C)(K) \) is canonically a braided crossed module of ordinary groups.
	
	It remains to check that all operations are well defined on the quotients
	\( \KFunc_1^G(K) \) and
	\( \KFunc_1^{ G / C }(K) \). This easy follows from surjectivity of
	\( \Elem_G(K) \to \Elem_{ G / C }(K) \) and invariance of
	\( \Elem_G(K) \leq G(K) \) under the group
	\( \Aut(G) \) of algebraic automorphisms of
	\( G \).
\end{proof}

\section{Constructions of classical groups}

Before proceeding with solvability of
\( \KFunc_1 \) over semilocal rings we need explicit construction of all classical simple group schemes up to isogeny.

Let
\( S \) be a unital associative ring and
\( \lambda \in S^* \) an invertible element. A
\textit{%
	\( \lambda \)-involution%
} on
\( S \) is an additive map
\( S \to S,\, x \mapsto x^* \) such that
\( 1^* = 1 \),
\( (x y)^* = y^* x^* \) (i.e.\ the map is an anti-endomorphism),
\( x^{* *} = \lambda x \lambda^{- 1} \), and
\( \lambda^* = \lambda^{- 1} \). In this paper we need only the cases
\( \lambda = \pm 1 \), so the reader can assume that
\( \lambda \) is central. A
\textit{form parameter} is an additive subgroup
\( \Lambda \leq S \) such that
\( x^* \Lambda x \leq \Lambda \) for all
\( x \in S \) and
\[
	\{ x - x^* \lambda \mid x \in S \}
	=
	\Lambda_{\min}
	\leq
	\Lambda
	\leq
	\Lambda_{\max}
	=
	\{ x \in S \mid x + x^* \lambda = 0 \}.
\]
Clearly, both
\( \Lambda_{\min} \) and
\( \Lambda_{\max} \) are form parameters and they coincide if
\( 2 \in S^* \) (in this case there is only one form parameter). A
\textit{form ring} is a unital associative ring
\( S \) together with an element
\( \lambda \in S^* \), a
\( \lambda \)-involution, and a form parameter.

A
\textit{hermitian form} on a right module
\( M \) over a form ring
\( S \) is a biadditive map
\( B \colon M \times M \to S \) such that
\( B(m x, n y) = x^* B(m, n) y \) and
\( B(m, n) = B(n, m)^* \lambda \). Such a form is called
\textit{non-degenerate} if
\( M_S \) is finitely generated projective
\( m \mapsto B( m, {-} ) \) is a bijection between
\( M \) and the dual module
\( \Hom_S(M, S) \). A
\textit{quadratic form} is a map
\( q \colon M \to S / \Lambda \) such that
\begin{align*}
		q(m x) &= x^* q'(m) x + \Lambda,
	&
		q(m + n) &= q(m) + B(m, n) + q(m'),
	&
		B(m, m) &= q'(m) + q'(m)^* \lambda,
\end{align*}
where
\( q'(m) \in S \) denotes any fixed preimage of
\( q(m) \). The triple
\( (M, B, q) \) is called a
\textit{quadratic module}. The
\textit{unitary group} of a quadratic module
\( (M, B, q) \) is
\[
	\Unit(M)
	=
	\{
		g \in \Aut(M_S)
		\mid
		B(g m, g n) = B(m, n),
		q(g m) = q(m)
	\}.
\]

There is a general method to construct quadratic forms. Let
\( Q \colon M \times M \to S \) be a
\textit{sesquilinear form} on a right
\( S \)-module
\( M \), i.e.\ a biadditive map such that
\( Q(m x, n y) = x^* Q(m, n) y \). Then
\( (M, B, q) \) is a quadratic module, where
\begin{align*}
		B(m, n) &= Q(m, n) + Q(n, m)^* \lambda,
	&
		q(m) &= Q(m, m) + \Lambda.
\end{align*}

\begin{lemma}
	\label{qua-eve}
	Suppose that
	\( M \) is a finitely generated projective
	\( S \)-module. Then every pair of hermitian and quadratic forms on
	\( M \) can be constructed by a sesquilinear form.
\end{lemma}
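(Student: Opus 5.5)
The plan is to reduce to a free module, where a sesquilinear form can be written down coordinatewise, and then descend along a direct summand. Let \( (B, q) \) be a hermitian form and a quadratic form on \( M \) satisfying the axioms above.

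\textbf{Free case.} Suppose first that \( M = S^n \) with standard basis \( e_1, \ldots, e_n \). Fix elements \( c_i \in S \) with \( q(e_i) = c_i + \Lambda \). Define \( Q \) on basis vectors by
\[
	Q(e_i, e_j) = B(e_i, e_j) \text{ for } i < j,
	\qquad
	Q(e_i, e_i) = c_i,
	\qquad
	Q(e_i, e_j) = 0 \text{ for } i > j,
\]
and extend sesquilinearly by \( Q\bigl( \sum_i e_i x_i, \sum_j e_j y_j \bigr) = \sum_{i, j} x_i^* Q(e_i, e_j) y_j \).

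First I check that \( Q \) gives back \( B \). For \( i < j \) we get \( Q(e_i, e_j) + Q(e_j, e_i)^* \lambda = B(e_i, e_j) \). For \( i > j \) we get \( B(e_j, e_i)^* \lambda = B(e_i, e_j) \) by hermitian symmetry. On the diagonal we get \( c_i + c_i^* \lambda = B(e_i, e_i) \) by the axiom relating \( B \) and \( q' \). Both sides are sesquilinear, so they agree on all of \( M \).

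Next I check that \( Q \) gives back \( q \). Put \( m = \sum_i e_i x_i \). Repeated use of the additivity axiom for \( q \) together with \( q(e_i x_i) = x_i^* c_i x_i + \Lambda \) gives
\[
	q(m) = \sum_i x_i^* c_i x_i + \sum_{i < j} B(e_i x_i, e_j x_j) + \Lambda,
\]
and the right-hand side is exactly \( Q(m, m) + \Lambda \).

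\textbf{General case.} Write \( M \oplus N \cong S^n \) for some module \( N \). Extend the forms to \( M \oplus N \) by zero on \( N \): set \( B \equiv 0 \) and \( q \equiv 0 \) on \( N \), and take \( M \) and \( N \) orthogonal. The extended pair still satisfies the axioms, because the axioms are compatible with orthogonal sums. The same formulas show that \( (0, 0) \) on \( N \) is a legitimate pair, since \( 0 \in \Lambda \). Applying the free case to the extended pair gives a sesquilinear form \( Q \) on \( M \oplus N \). Its restriction to \( M \) is sesquilinear and induces \( B \) and \( q \) on \( M \), because both constructions are compatible with restriction to a submodule.

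\textbf{Main obstacle.} The only delicate point is the step in the free case where \( q \) is recovered from \( Q \). The identity \( q(m + n) = q(m) + B(m, n) + q(n) \) holds only modulo \( \Lambda \), and the ring \( S \) is noncommutative, so the order of summands and the placement of the involution need care when expanding \( q\bigl( \sum_i e_i x_i \bigr) \). I would handle this by induction on the number of summands, splitting off the last term \( e_n x_n \) at each step. Well-definedness is not an issue: everything is computed in \( S / \Lambda \), and the choice of \( c_i \) matters only up to \( \Lambda \).
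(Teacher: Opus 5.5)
Your proof is correct and follows the paper's argument. You pad \(M\) to a free module by a complement carrying zero forms, and on the free module you take the ``upper triangular'' sesquilinear form with \(B(e_i, e_j)\) above the diagonal and lifts of \(q(e_i)\) on the diagonal. The only difference is that you write out the verifications (recovery of \(B\) and \(q\), sesquilinearity, restriction to \(M\)) that the paper leaves implicit.
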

\begin{proof}
	Let
	\( P \) be a finitely generated projective module such that
	\( M \oplus P \cong S^n \) is free. The module
	\( M \oplus P \) has forms
	\begin{align*}
			B( m \oplus p, m' \oplus p' ) &= B(m, n),
		&
			q( m \oplus p ) &= q(m),
	\end{align*}
	i.e.\ this is the orthogonal sum of
	\( M \) and
	\( P \), and the forms on
	\( P \) are zero. Clearly, it suffices to construct appropriate sesquilinear form on
	\( M \oplus P \), so without loss of generality
	\( M = S^n \) is free.
	
	Now let
	\( e_1, \ldots, e_n \in M \) be its basis as a free module. Both forms are completely determined by the values
	\( b_{i j} = B(e_i, e_j) \in S \) and
	\( q_i = q(e_i) \in S / \Lambda \). Moreover, these values can be arbitrary subject to the relations
	\begin{align*}
			b_{i j} &= b_{j i}^* \lambda
			\text{ for }
			i > j,
		&
			b_{i i} &= q'_i + (q'_i)^* \lambda
	\end{align*}
	where
	\( q'_i \in S \) are arbitrary preimages of
	\( q_i \). We can take
	\[
		Q\Bigl(
			\bigoplus_{i = 1}^n e_i x_i,
			\bigoplus_{i = 1}^n e_i y_i
		\Bigr)
		=
		\sum_{ 1 \leq i < j \leq n }
			x_i^* b_{i j} y_j
		+
		\sum_{i = 1}^n
			x_i^* q'_i y_i.
		\qedhere
	\]
\end{proof}

For example, let
\( P \) be a finitely generated projective module over a form ring
\( S \). The abelian group
\( \Hyp(P) = \Hom_S(P, S) \oplus P \) is a right
\( S \)-module under
\( (f \oplus p) x = x^* f \oplus p x \). Together with the forms
\begin{align*}
		B( f \oplus p, f' \oplus p' )
		&=
		f(p') + f'(p)^* \lambda,
	&
		q(f \oplus p) &= f(p) + \Lambda
\end{align*}
the module
\( \Hyp(P) \) is called a
\textit{hyperbolic quadratic module}. These forms are constructed by the sesquilinear form
\( Q( f \oplus p, f' \oplus p' ) = f(p') \), and the hermitian form is non-degenerate.

As an example of possible degenerate hermitian forms let
\( S = K \) be commutative with
\( \lambda = 1 \),
\( x^* = x \), and
\( \Lambda = 0 \). We say that
\( q \colon M \to K \) is a
\textit{traditional quadratic form} if
\( q(m x) = q(m) x^2 \) and the expression
\( B(m, n) = q(m + n) - q(m) - q(n) \) is bilinear. Clearly, all quadratic modules over this
\( S \) are precisely modules with traditional quadratic forms. The corresponding unitary group are called
\textit{orthogonal group} and denoted by
\( \Orth(q) \). If
\( M = \bigoplus_{i = 1}^{n} e_i K \) is free of finite rank, then
\( B \) is non-degenerate if and only if its
\textit{discriminant}
\( \mathrm{disc}(B) = \det_{i, j = 1}^n B(e_i, e_j) \) is invertible. In the case of odd rank
\( n = 2 \ell + 1 \) this determinant as a polynomial over
\( \Int \) in the formal variables
\( B(e_i, e_j) = B(e_j, e_i) \) for
\( i < j \) and
\( q(e_i) = \frac{1}{2} B(e_i, e_i) \) is divisible by
\( 2 \) and we call
\( \hdisc(B) = \frac{1}{2} \mathrm{disc}(B) \) the
\textit{half-discriminant} of
\( B \). A quadratic form
\( q \) on a free module of odd rank is called
\textit{semi-regular} if the corresponding half-discriminant is invertible. Both discriminant and half-discriminant preserve invertibility under base change, so we can define semi-regular quadratic forms on finitely generated projective modules of constant odd rank by Zariski localization. If
\( 2 \) is not invertible, then semi-regular quadratic forms always have degenerate symmetric bilinear forms
\( B \).

Sometimes it is more convenient to work with the graph of a quadratic form
\( q \) instead of
\( q \) itself. The
\textit{Heisenberg group} of a hermitian form
\( B \) is the set
\( \Heis(B) = M \times S \) with the group operation
\[
	(m, x) \dotplus (n, y)
	=
	( m + n, x - B(m, n) + y ).
\]
The multiplicative monoid
\( S^\bullet\) of
\( S \) acts on
\( \Heis(B) \) by endomorphisms from the right,
\( (m, x) \cdot y = ( m y, y^* x y ) \). An
\textit{odd form parameter}
\cite{odd-uni-gro} is an
\( S^\bullet \)-invariant subgroup
\( \OddFormPar \leq \Heis(B) \) such that
\[
	\{ ( 0, x - x^* \lambda ) \mid x \in S \}
	=
	\OddFormPar_{\min}
	\leq
	\OddFormPar
	\leq
	\OddFormPar_{\max}
	=
	\{ (m, x) \mid x + B(m, m) + x^* \lambda = 0 \}.
\]
Clearly, both
\( \OddFormPar_{\min} \) and
\( \OddFormPar_{\max} \) are odd form parameters, i.e.\ they are
\( S^\bullet \)-invariant subgroups and
\( \OddFormPar_{\min} \leq \OddFormPar_{\max} \). In this case the unitary group is defined as
\[
	\Unit(M)
	=
	\bigl\{
		g \in \Aut(M_S)
		\mid
		B(g m, g n) = B(m, n),
		( g m - m, B(g m - m, m) ) \in \OddFormPar
	\bigr\}.
\]

If
\( \Lambda \) is an ordinary form parameter and
\( q \colon M \to S / \Lambda \) is a quadratic form, then
\( \OddFormPar = \{ (m, x) \mid x + \Lambda = - q(m) \} \) is an odd form parameter. Conversely, every odd form parameter
\( \OddFormPar \) such that the first projection
\( \OddFormPar \to M \) is surjective appears in this way from
\(
	\Lambda
	=
	\{ x \in S \mid (0, x) \in \OddFormPar \}
\) and a unique quadratic form.

The language of ordinary form parameters and quadratic forms allows us to easily construct orthogonal sums of modules with forms. On the other hand, the language of odd form parameters is better suited to work with elementary unitary groups.

Now let
\( (M, B, q) \) be a quadratic module over a form ring
\( S \). Take an integer
\( \ell \geq 0 \). Consider the module
\[
	e_{- \ell} S
	\oplus
	\ldots
	\oplus
	e_{- 1} S
	\oplus
	M
	\oplus
	e_1 S
	\oplus
	\ldots
	\oplus
	e_\ell S
\]
obtained from
\( M \) by adding a free direct summand of rank
\( 2 \ell \). This large module has the forms
\begin{align*}
		B\Bigl(
				\bigoplus_{i = - \ell}^{- 1}
					e_i x_i
				\oplus
				m
				\oplus
				\bigoplus_{i = 1}^{\ell}
					e_i x_i
			,
				\bigoplus_{i = - \ell}^{- 1}
					e_i y_i
				\oplus
				n
				\oplus
				\bigoplus_{i = 1}^{\ell}
					e_i y_i
		\Bigr)
		&=
		\sum_{i = - \ell}^{- 1}
			x_i^* y_{- i}
		+
		B(m, n)
		+
		\sum_{i = 1}^\ell
			x_i^* \lambda y_{- i},
	\\
		q\Bigl(
			\bigoplus_{i = - \ell}^{- 1}
				e_i x_i
			\oplus
			m
			\oplus
			\bigoplus_{i = 1}^{\ell}
				e_i x_i
		\Bigr)
		&=
		q(m)
		+
		\sum_{i = - \ell}^{- 1}
			x_i^* x_{- i}.
\end{align*}
In other words, this is the orthogonal sum of
\( M \) and the split hyperbolic quadratic module
\(
	\Hyp\bigl(
		\bigoplus_{i = 1}^\ell
			e_i S
	\bigr)
\). We denote the resulting unitary group by
\( \Unit(2 \ell, M) \). Clearly, there is a natural chain
\[
	\Unit(M)
	=
	\Unit(0, M)
	\leq
	\Unit(2, M)
	\leq
	\Unit(4, M)
	\leq
	\ldots
\]
of groups. Starting from
\( \ell = 1 \) each of these groups has the canonical elementary subgroup
\( \ElemUnit(2 \ell, M) \leq \Unit(2 \ell, M) \) generated by strictly upper and lower triangular generalized matrices from
\( \Unit(2 \ell, M) \). The factor-set is denoted by
\( \KUnit_1(2 \ell, M) \). By a special case of
\cite[theorem 4]{odd-uni-gro} the elementary subgroup is normal if
\( S \) is semilocal and
\( \ell \geq 2 \), so
\( \KUnit_1(2 \ell, M) \) is a group under these conditions. As with isotropic reductive groups, elementary subgroups of unitary groups are also generated by their
\textit{root subgroups} indexed by a root system of type
\( \RootSys{BC}{\ell} \), where ultrashort root subgroups are isomorphic to the odd form parameter
\( \OddFormPar \), short ones are isomorphic to the ring
\( S \), and the long ones are isomorphic to the ordinary form parameter
\( \Lambda \). See
\cite[\S 6]{odd-uni-gro} for details.

Finally, there is a construction of unitary groups avoiding quadratic modules. A pair
\( (R, \Delta) \) is called a (special unital) odd form ring if
\( R \) is a unital associative ring with involution
\( x \mapsto \inv{x} \) (with
\( \lambda = 1 \)) and
\( \Delta \) is an odd form parameter of the hermitian form
\( (x, y) \mapsto \inv{x} y \) on the module
\( R \). In other words,
\( \Delta \subseteq R \times R \) is closed under the operations
\begin{align*}
		(x, y) \dotplus (z, w)
		&=
		( x + z, y - \inv{x} z + w ),
	&
		(x, y) \cdot z &= ( x z, \inv{z} y z ),
\end{align*}
and
\[
	\{ ( 0, x - \inv{x} ) \mid x \in R \}
	=
	\Delta_{\min}
	\leq
	\Delta
	\leq
	\Delta_{\max}
	=
	\{
		(x, y) \in R \times R
		\mid
		y + \inv{x} x + \inv{y} = 0
	\}.
\]
The unitary group of
\( R \) considered as a quadratic module is
\[
	\Unit(R, \Delta)
	=
	\{
		g \in R^*
		\mid
		g^{- 1} = \inv{g},
		(g - 1, g - 1) \in \Delta
	\}.
\]
In
\cite{twi-for-cla} we used the condition
\( ( g - 1, \inv{g} - 1 ) \in \Delta \), but this is equivalent to
\( ( g - 1, g - 1 ) \in \Delta \) because
\( ( 0, g - \inv{g} ) \in \Delta_{\min} \).

An
\textit{orthogonal hyperbolic family} of rank
\( \ell \) in an odd form ring
\( (R, \Delta) \) is a complete family of orthogonal idempotents
\( e_{- \ell}, \ldots, e_0, \ldots, e_\ell \in R \) such that
\( \inv{e_i} = e_{- i} \) and
\( (e_i, 0) \in \Delta \) for
\( i \neq 0 \). Such a family is a replacement of a family of hyperbolic direct summand from the definition of
\( \Unit(2 \ell, M) \) above, in particular, the elementary subgroup
\( \ElemUnit(R, \Delta) \leq \Unit(R, \Delta) \) is defined as the group generated by upper and lower generalized triangular matrices from
\( \Unit(R, \Delta) \). If such a family is fixed, then root elements are defined as
\begin{align*}
		T_{i j}(x) &= 1 + x - \inv{x},
	&
		T_i(y, z) &= 1 + y + z - \inv{y},
\end{align*}
where
\( x \in e_i R e_j \),
\( y \in e_0 R e_i \),
\( z \in e_{- i} R e_i \),
\( (y, z) \in \Delta \), and
\( 0 \neq i \neq \pm j \neq 0 \). The root subgroup
\( \Image( T_{i j} ) = \Image( T_{- j, - i} ) \) corresponds to the root
\( \e_j - \e_i \), and
\( \Image( T_i ) \) corresponds to the root
\( \e_i \) of
\[
		\Phi
	=
		\{
			\pm \e_i \pm \e_j
			\mid
			1 \leq i < j \leq \ell
		\}
		\sqcup
		\{
			\pm \e_i,
			\pm 2 \e_i
			\mid
			1 \leq i \leq \ell
		\}
	\subseteq
		\Real^\ell,
\]
where
\( \e_{- i} = - \e_i \). The long root subgroup corresponding to
\( 2 \e_i \) consists of all
\( T_i(0, z) \).

Unitary groups constructed by quadratic modules and odd form algebras are related as follows. Let
\( S \) be a form ring and
\( (M, B, q) \) a quadratic module over
\( S \) with non-degenerate hermitian form
\( B \). Then
\( R = \mathrm{End}(M_S) \) has the involution
\( B( \inv{r} m, n ) = B(m, r n) \) and the form parameter
\[
	\Delta
	=
	\{
		(r, s) \in R \times R
		\mid
		s + \inv{r} r + \inv{s} = 0,
		q(r m) + B(m, s m) = 0
	\},
\]
so
\( (R, \Delta) \) is an odd form ring
\cite[\S 3]{twi-for-cla}. It is easy to see that
\( \Unit(M) = \Unit(R, \Delta) \) as subgroups of
\( R \). If we apply this construction to the quadratic module
\( M \perp \Hyp(S^\ell) \) instead of
\( S \), then the resulting endomorphism ring
\( R \) has an obvious orthogonal hyperbolic family.

Before stating main results we need to deal with triality in the case
\( \RootSys{D}{4} \). Recall that the split simple adjoint group scheme
\( \ProjSpecOrthSch_8 \) of type
\( \RootSys{D}{4} \) over
\( K \) has the automorphism group scheme
\( \ProjSpecOrthSch_8 \rtimes \Sym_3 \), where the symmetric group
\( \Sym_3 \) is considered as a constant sheaf. We say that a simple group scheme
\( G \) with absolute root system of type
\( \RootSys{D}{4} \)
\textit{avoids triality} if its isomorphism class lies in the image of
\[
	\Hyp^1_\fppf(
		K,
		\ProjSpecOrthSch_8 \rtimes \Int / 2 \Int
	)
	\to
	\Hyp^1_\fppf(
		K,
		\ProjSpecOrthSch_8 \rtimes \Sym_3
	).
\]
This is independent of the choice of a subgroup
\( \Int / 2 \Int \leq \Sym_3 \) because all such subgroups are conjugate. For example,
\( G \) avoids triality if it is an inner form (from
\( \Hyp^1_\fppf( K, \ProjSpecOrthSch_8 ) \)) or a twisted form of
\( \SpecOrthSch_8 \). Also
\( G \) avoids triality if the base ring
\( K \) is semilocal with connected spectrum, and the Tits index is classical (i.e.\ neither
\( \TitsIndex{D}{s}{4}{0}{28} \), nor
\( \TitsIndex{D}{s}{4}{1}{9} \), nor
\( \TitsIndex{D}{s}{4}{2}{2} \) for
\( s \in \{ 3, 6 \} \)).

\begin{lemma}
	\label{twi-cla}
	Let
	\( G \) be a simple adjoint group scheme over arbitrary commutative unital ring
	\( K \) with the absolute root system of classical type, i.e.\ %
	\( \RootSys{A}{\ell} \),
	\( \RootSys{B}{\ell} \),
	\( \RootSys{C}{\ell} \), or
	\( \RootSys{D}{\ell} \). In the case of
	\( \RootSys{D}{4} \) assume that
	\( G \) avoids triality. If its absolute root system is of type
	\( \RootSys{B}{\ell} \), then
	\( G \) is the orthogonal group scheme of a finitely generated projective
	\( K \)-module of constant rank
	\( 2 \ell + 1 \) with semi-regular traditional quadratic form. If the type is
	\( \RootSys{A}{\ell} \),
	\( \RootSys{C}{\ell} \), or
	\( \RootSys{D}{\ell} \), then
	\( G \) is the scheme derived subgroup of the automorphism group scheme of a sheaf
	\( ( \ModSheaf(R), \ModSheaf(\Delta) ) \) of odd form
	\( K \)-algebras,
	\( R \) is an Azumaya algebra over
	\( K \) or its quadratic \'etale extension, the first projection
	\( \Delta \to R \) is surjective, and the kernel of this projection is finitely generated projective
	\( K \)-module. The scheme derived subgroup of the unitary group sheaf is also a simple group scheme and a finite central extension of
	\( G \).
\end{lemma}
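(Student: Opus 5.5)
The plan is a descent argument. I would first fix the split model for each classical type, then show that twisting the model by a cocycle twists the odd form algebra in the same way. Descent theory then gives the statement for every form of $G$.

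Take type $\RootSys{B}{\ell}$ first. The split adjoint group is $\SpecOrthSch_{2\ell+1}$, namely the (scheme) special orthogonal group of the standard semi-regular form $x_0^2 + \sum_{i} x_{-i} x_i$ on $K^{2\ell+1}$. Its automorphism group scheme is $\SpecOrthSch_{2\ell+1}$ itself, because there are no outer automorphisms. So forms of $G$ are classified by $\Hyp^1_\fppf(K, \SpecOrthSch_{2\ell+1})$. The map $\Orth \to \SpecOrthSch$, which kills the center $\mu_2$ in the odd case, identifies this set with $\Hyp^1_\fppf(K, \Orth_{2\ell+1})$. The latter classifies semi-regular quadratic forms of rank $2\ell+1$, since any two such forms are fppf-locally isomorphic. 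Hence $G$ is the (special) orthogonal group scheme of the twisted form.

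Now take types $\RootSys{A}{\ell}$, $\RootSys{C}{\ell}$ and $\RootSys{D}{\ell}$, and choose the split odd form algebra $(R_0, \Delta_0)$ accordingly.
\begin{itemize}
\item For $\RootSys{A}{\ell}$, set $R_0 = \Mat(\ell+1, K) \times \Mat(\ell+1, K)^{\mathrm{op}}$ with the exchange involution. This is the endomorphism algebra of the hyperbolic module $\Hyp(K^{\ell+1})$ over the form ring $K \times K^{\mathrm{op}}$.
\item For $\RootSys{C}{\ell}$ and $\RootSys{D}{\ell}$, take $R_0 = \mathrm{End}(\Hyp(K^\ell))$, with $\lambda = -1$ and $\lambda = 1$ respectively.
\end{itemize}
In each case $\Delta_0$ is built from the quadratic form as in the construction preceding the lemma. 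The first projection $\Delta_0 \to R_0$ is then surjective, because a quadratic form is defined on the whole module. Its kernel is $\{(0, s)\}$ with $s$ a $\Lambda$-valued "skew" element, which is a free $K$-module of finite rank.

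The core step is to show that the natural map
\[
	\Aut(R_0, \Delta_0) \to \Aut(G_0)
\]
is an isomorphism of fppf sheaves, where the left side is the automorphism group sheaf of the odd form algebra sheaf. For $\RootSys{D}{4}$ the right side should be replaced by $\ProjSpecOrthSch_8 \rtimes \Int/2\Int$. I would prove this on points over any $K$-algebra. Automorphisms of $R_0$ preserving the involution and $\Delta_0$ are inner by projective unitary similitudes, possibly composed with the exchange or swap automorphism. This uses Skolem--Noether for Azumaya algebras, applied locally. It identifies them with $\mathrm{PGU} \rtimes \Out$, and this matches $\ChevGrp^{\mathrm{ad}} \rtimes \Out(\widetilde\Phi)$ because the outer automorphism groups agree; the exchange corresponds to transpose-inverse for $\RootSys{A}$, and the swap of the two isotropic families corresponds to the outer automorphism for $\RootSys{D}$.

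With this isomorphism in hand, a cocycle in $\Hyp^1_\fppf(K, \Aut(G_0))$ twists $(R_0, \Delta_0)$ into a sheaf of odd form algebras $(\ModSheaf(R), \ModSheaf(\Delta))$. The properties in the statement all hold fppf-locally, so they descend: $R$ is Azumaya over its center, and the center is a quadratic \'etale extension in type $\RootSys{A}$. Surjectivity of the projection and finite generated projectivity of its kernel descend as well. The scheme derived subgroup of $\Unit(\ModSheaf(R), \ModSheaf(\Delta))$ is formed compatibly with twisting, so $G$ is recovered as the twist of $G_0$. In the split case that derived subgroup is $\SpecLin_{\ell+1}$, $\mathrm{Sp}_{2\ell}$ or $\mathrm{Spin}$-isogenous $\SpecOrthSch_{2\ell}$. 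These are simple and are finite central extensions of $G_0$, and both properties descend.

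I expect the main obstacle to be the precise identification of $\Aut(R_0, \Delta_0)$ in characteristic $2$ for the orthogonal case. There one has to check that the odd form parameter $\Delta_0$ cuts the automorphisms down exactly to $\mathrm{PGO}$ rather than a larger group preserving only the involution. The $\RootSys{D}{4}$ triality caveat also enters here, and it is handled by the assumption that $G$ avoids triality.
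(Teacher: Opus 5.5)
Your strategy is sound and is in substance the paper's. For $\RootSys{B}{\ell}$ the paper argues by the same descent. For $\RootSys{A}{\ell}$, $\RootSys{C}{\ell}$, $\RootSys{D}{\ell}$ it simply cites \cite[theorem 1]{twi-for-cla}, which is essentially the identification $\Aut(R_0, \Delta_0) \cong \Aut(G_0)$ you propose to prove. However, twice you assert a bijection where there is only a split surjection, and as written both steps are false.

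In type $\RootSys{B}{\ell}$ we have $\Orth_{2\ell+1} = \SpecOrthSch_{2\ell+1} \times \mu_2$. So $\Hyp^1_\fppf(K, \Orth_{2\ell+1}) \to \Hyp^1_\fppf(K, \SpecOrthSch_{2\ell+1})$ has fibres $\Hyp^1_\fppf(K, \mu_2) \supseteq K^{*}/(K^{*})^{2}$. Concretely, $q$ and $u q$ for a non-square unit $u$ have the same special orthogonal group but need not be isometric. What saves you is that the inclusion $\SpecOrthSch_{2\ell+1} \leq \Orth_{2\ell+1}$ splits the projection. Hence every form of $\SpecOrthSch_{2\ell+1}$ is $\SpecOrthSch(q)$ for some semi-regular $q$, which is all the lemma claims. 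The paper gets an exact classification instead: it views $\SpecOrthSch_{2\ell+1}$ as the automorphism group scheme of a triple $(M, q, \omega)$ with a volume form satisfying $\hdisc_\omega(q) = (-1)^\ell$.

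The second failure is type $\RootSys{A}{1}$, where your justification ``the outer automorphism groups agree'' breaks down. For $R_0 = \Mat(2, K) \times \Mat(2, K)^{\mathrm{op}}$ the exchange still lies in $\Aut(R_0, \Delta_0)$ and induces $g \mapsto (g^{\mathrm{t}})^{-1}$. On $\SpecLin_2$ this is conjugation by $\sMat{0}{1}{-1}{0}$, even though $\Out(\RootSys{A}{1})$ is trivial. Hence $\Aut(R_0, \Delta_0) \to \Aut(\ProjGenLin_2) = \ProjGenLin_2$ has a nontrivial kernel, and you cannot transport cocycles back along it. You must instead lift through the section $\ProjGenLin_2 \leq \Aut(R_0, \Delta_0)$. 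This produces $R \times R^{\mathrm{op}}$ with the maximal odd form parameter for a quaternion algebra $R$, which is exactly the case the paper singles out. Alternatively, treat $\RootSys{A}{1}$ as $\RootSys{C}{1}$.

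Two smaller points:
\begin{itemize}
\item For type $\RootSys{C}{\ell}$ you must specify $\Lambda = \Lambda_{\max}$. With $\Lambda_{\min}$ and $2 \notin K^*$ the unitary group is smaller than the symplectic group; over $\Field_2$-algebras it is an orthogonal group.
\item The characteristic-$2$ obstacle you single out is easy. An automorphism is locally conjugation by some $g$ with $\inv{g} g = \mu$ central. Applying it to pairs $(r, s) \in \Delta_0$ with $r$ arbitrary, which is possible because the first projection is surjective, gives exactly $q(g v) = \mu q(v)$ for all $v$. So $\Delta_0$ cuts the automorphisms down to the similitudes of $q$.
\end{itemize}
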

\begin{proof}
	Recall that the group scheme
	\( \SpecOrthSch_{2 \ell + 1} \) for
	\( \ell \geq 1 \) is simple and adjoint, and its point group
	\( \SpecOrth(2 \ell + 1, K) \) is the intersection of the orthogonal group
	\( \Orth(2 \ell + 1, K) \) of the split traditional quadratic form
	\(
		q( x_{- \ell}, \ldots, x_\ell )
		=
		x_0^2 + \sum_{i = 1}^\ell x_{- i} x_i
	\) with
	\( \SpecLin(2 \ell + 1, K) \). In other words,
	\( \SpecOrth(2 \ell + 1, K) \) is the automorphism group of the triple
	\( (M, q, \omega) \), where
	\( M = K^{2 \ell + 1} \) and
	\(
		\omega
		=
		e_{- \ell} \wedge \ldots \wedge e_\ell
		\in
		\Lambda^{2 \ell + 1}(M)
	\) is the standard volume form. Since
	\( \SpecOrthSch_{2 \ell + 1} \) is its own automorphism group scheme, all its twisted forms are automorphism group schemes of triples
	\( (M, q, \omega) \), where
	\( M \) is a projective
	\( K \)-module of constant rank
	\( 2 \ell + 1 \),
	\( q \colon M \to K \) is a traditional semi-regular quadratic form on
	\( M \), and
	\( \omega \in \Lambda^{2 \ell + 1}(M) \) is a non-degenerate volume form such that
	\( \hdisc_\omega(q) = (- 1)^\ell \). Here
	\( \hdisc_\omega \) denotes the half-discriminant evaluated Zariski locally in a basis with the exterior product
	\( \omega \).
	
	Remaining cases follow from
	\cite[theorem 1]{twi-for-cla}. If
	\( G \) is a twisted form of
	\( \ProjGenLin_2 \), then it is the automorphism group scheme of a quaternion algebra
	\( R \) over
	\( K \) and the unitary group scheme of
	\( R \times R^{ \mathrm{op} } \) with the maximal odd form parameter.
\end{proof}

In the next theorem the three cases are distinguished by the type of the relative root system, namely,
\( \RootSys{A}{r} \) in the first case,
\( \RootSys{B}{r} \) or
\( \RootSys{D}{r} \) in the second one, and
\( \RootSys{BC}{r} \) or
\( \RootSys{C}{r} \) in the last one. The result (with the same proof) actually hold for simple adjoint group schemes over arbitrary commutative ring if we require that the descent data is contained in the group subscheme corresponding to a Tits index and there exist all Weyl element permuting the root group subschemes of
\( G \) and normalizing the group subscheme
\( L \leq G \).

\begin{theorem}
	\label{twi-iso-cla}
	Let
	\( G \) be a simple adjoint group scheme over semilocal
	\( K \) with connected spectrum. Assume that the isotropic rank of
	\( G \) is positive and the Tits index is classical (in particular, neither
	\( \TitsIndex{D}{s}{4}{1}{9} \) nor
	\( \TitsIndex{D}{s}{4}{2}{2} \)). Then
	\( G \) has one of the following standard forms.
	\begin{itemize}
		
		\item
		For Tits indices
		\( \TitsIndex{A}{1}{n}{r}{ (d) } \) the group scheme is the automorphism group scheme of the algebra
		\( R = \Mat(\ell + 1, S) \), where
		\( S \) is an Azumaya algebra of rank
		\( d \).
		
		\item
		For Tits indices
		\( \TitsIndex{B}{}{n}{r}{} \) and
		\( \TitsIndex{D}{s}{n}{r}{ (1) } \) the group scheme is the projective special orthogonal group
		\(
			\ProjSpecOrthSch(r, M)
			=
			\ProjSpecOrthSch( M \perp \Hyp(K^r) )
		\), where
		\( M \) is finitely generated projective module of constant rank
		\( 2 n + 1 - 2 r \) with semi-regular traditional quadratic form or
		\( 2 n - 2 r \) with non-degenerate traditional quadratic form respectively.
		
		\item
		Otherwise if
		\( \TitsIndex{X}{s}{n}{r}{ (d) } \) is the Tits index, then there are a form ring
		\( S \) and a quadratic module
		\( (M, B, q) \) over
		\( S \) such that
		\( S \) is an Azumaya algebra over
		\( K \) or its quadratic \'etale extension (for
		\( \mathsf{X} = \mathsf{A} \)) of rank
		\( d \) with
		\( K \)-linear involution,
		\( \lambda = - 1 \), the form parameter
		\( \Lambda \) is a direct summand of
		\( S \) as a
		\( K \)-module, and the hermitian form
		\( B \) is non-degenerate. The rank of
		\( M \) over
		\( K \) is
		\( 2 d (n - r d) \) for
		\(
			\mathsf{X}
			\in
			\{ \mathsf{C}, \mathsf{D} \}
		\) and
		\( 2 d (n + 1 - 2 r d) \) for
		\( \mathsf{X} = \mathsf{A} \). Let
		\( \UnitSch \) be the unitary group scheme of
		\( M \perp \Hyp(S^r) \). Then the scheme derived subgroup
		\( [ \UnitSch, \UnitSch ] \) is a simple reductive group scheme and
		\( G \) is its scheme factor-group by the center.
		
	\end{itemize}
	In all these cases the scheme images in
	\( G \) of the group schemes of upper and lower triangular matrices are opposite minimal parabolic subgroups.
\end{theorem}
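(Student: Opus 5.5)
We start from Lemma~\ref{twi-cla}, which identifies \( G \) with an orthogonal group scheme of a semi-regular quadratic form in type \( \RootSys{B}{\ell} \), and otherwise with the adjoint quotient of the scheme derived subgroup of the unitary group of a sheaf of odd form algebras \( (R, \Delta) \). Here \( R \) is an Azumaya algebra over \( K \) or over a quadratic \'etale extension. The remaining task is to use isotropy to split off hyperbolic summands. Since \( K \) is semilocal with connected spectrum, we fix a maximal split torus \( T \) and the minimal parabolics \( P^{\pm} = U^{\pm} \rtimes L \). The torus \( T \) acts on the standard representation, which is \( M \) in the orthogonal case and \( R \) in the odd form ring case. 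Its weight decomposition gives a complete family of orthogonal idempotents in \( R \), or a direct sum decomposition of \( M \), indexed by the weights of this representation. We expect these weights to be \( 0 \) and \( \pm \e_i \) for classical indices of types \( \mathsf{B}, \mathsf{C}, \mathsf{D} \), and \( \e_i \) for inner type \( \mathsf{A} \). The first step is to read off the weights from the map \( u \colon \widetilde{\Phi} \to \Phi \) over a splitting extension \( \widetilde{K} \). Faithfully flat descent then shows that the weight idempotents are defined over \( K \), since \( T \) is.

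In the inner type \( \mathsf{A} \) case, \( G = \Aut(R) \) with \( R \) Azumaya. The weight idempotents \( e_0, \ldots, e_r \) (for \( r = \ell \) in the relative sense) are pairwise isomorphic: the Weyl elements for the relative roots \( \e_j - \e_i \) conjugate \( e_i \) to \( e_j \). Hence \( R \cong \Mat(r + 1, S) \) with \( S = e_0 R e_0 \). The rank of \( S \) over \( K \) is read off fiberwise, and equals \( d \) (squared degree) by the definition of the Tits index.

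In the remaining cases the involution swaps \( e_i \) and \( e_{-i} \), and \( e_0 \) is the anisotropic part. We show \( (e_i, 0) \in \Delta \) for \( i \neq 0 \), using that the root subgroups \( U_{\pm \e_i} \) must lie in the unitary group scheme. This gives an orthogonal hyperbolic family of rank \( r \) in \( (R, \Delta) \). The Weyl elements for the short roots \( \e_j - \e_i \) identify all \( e_i R e_i \) for \( i \neq 0 \) with a single ring \( S \), and the element for a long or ultrashort root identifies \( e_{-i} \) with the dual of \( e_i \). Using the Morita equivalence between \( R \) and \( S \) via \( e_1 R \), we rewrite \( (R, \Delta) \) as the endomorphism odd form ring of \( M \perp \Hyp(S^r) \), where \( M = e_0 R e_1 \) is a right \( S \)-module. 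Its hermitian form is induced from the involution and its quadratic form from \( \Delta \), with \( \lambda = -1 \) up to the standard rescaling. Surjectivity of \( \Delta \to R \) lets us apply the correspondence between odd form parameters with surjective projection and ordinary quadratic forms. The kernel of the projection is finitely generated projective, so \( \Lambda \) is a direct summand. Non-degeneracy of \( B \) follows from non-degeneracy on \( R \).

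For types \( \mathsf{B} \) and \( \mathsf{D} \) with \( d = 1 \) we instead decompose the quadratic module directly. The weight spaces for \( \pm \e_i \) are rank-one isotropic summands, paired by \( B \), so they span \( \Hyp(K^r) \) and the zero weight space is the orthogonal complement \( M \). In the \( \mathsf{B} \) case semi-regularity passes to \( M \) because the half-discriminant is multiplicative against the hyperbolic part. The ranks \( 2d(n - rd) \) and so on then follow by counting fibers over \( \widetilde{K} \).

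Finally, the parabolic claim needs care. \( U^{+} \) is generated by the \( U_\alpha \) for \( \alpha > 0 \), and these are exactly the upper-triangular elements in the weight ordering. So the images of the upper and lower triangular subgroup schemes are \( P^{\pm} \), and these are minimal because \( T \) is maximal split.

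The main obstacle will be the second step: proving that the weight idempotents for \( i \neq 0 \) really give a hyperbolic family, with \( (e_i, 0) \in \Delta \) and all \( e_i R e_i \) isomorphic. In particular this must hold when \( 2 \) is not invertible, where \( \Delta \) is not determined by the involution. My plan is to verify both properties after the faithfully flat base change to \( \widetilde{K} \), where everything reduces to explicit split computations with Chevalley root subgroups, and then descend using uniqueness of the idempotents as weight projectors.
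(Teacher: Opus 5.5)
Your skeleton matches the paper's: weight idempotents of the split torus, $K$-rational Weyl elements linking them, then $S = e_1 R e_1$ and $M = e_0 R e_1$. In type $\mathsf{B}$, using that rank-one projectives over semilocal $K$ are free is a fine shortcut for the paper's line-bundle argument. You should add one check there: when $2 \notin K^*$ the form $B$ is degenerate, so the perfect pairing of $M_i$ with $M_{-i}$ needs the fact that, by semi-regularity, the radical of $B$ meets no nonzero weight space over any residue field.

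The first gap is where $\lambda = -1$ comes from; there is no general ``standard rescaling''. You need an element $\epsilon = e_{-1,1} \in e_{-1} R e_1$, with inverse $e_{1,-1} \in e_1 R e_{-1}$, such that $(0, \epsilon) \in \Delta$, hence $\inv{\epsilon} = -\epsilon$. With it, the data on $S$ and $M$ are:
\begin{itemize}
\item the $(-1)$-involution $x^* = e_{1,-1} \inv{x} \epsilon$ on $S$;
\item the form parameter $\Lambda = \{ x \in S \mid (0, \epsilon x) \in \Delta \}$;
\item the hermitian form $B(m, m') = e_{1,-1} \inv{m} m'$ on $M$;
\item the odd form parameter $\{ (m, x) \mid (m, \epsilon x) \in \Delta \}$ on $M$.
\end{itemize}
The paper obtains $\epsilon$ from the long root Weyl element $T_1(0, e_{-1,1})\, T_{-1}(0, -e_{1,-1})\, T_1(0, e_{-1,1})$. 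A link produced by a Weyl element for a root $\e_i$ need not be skew.

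In type $\mathsf{D}$ with $d = 1$, every $x \in e_{-1} R e_1$ satisfies $\inv{x} = x$, because $2 \e_1 \notin \Phi$ and so the elements $T_1(0, x - \inv{x})$ are trivial. Over $S = K$ with trivial involution no rescaling changes $\lambda$, which is why this case is treated separately. But there you cannot ``decompose the quadratic module directly'': Lemma~\ref{twi-cla} gives only $(R, \Delta)$ in type $\mathsf{D}$. You must first build the traditional quadratic form on $e_0 R e_1$ out of $\Delta$, using symmetric links $e_{\mp 1, \pm 1}$ supplied by the remaining basic Weyl elements, as the paper does.

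The second gap is your plan to prove that all $e_i R e_i$ are isomorphic by checking over $\widetilde{K}$ and descending; this fails because isomorphisms do not descend. Moreover, the Morita step needs more than ring isomorphisms $e_i R e_i \cong e_j R e_j$. For every $e_i R e_1$ to be isomorphic to $S$, and hence for $\Hyp(S^r)$ to be free, you need linking elements $e_{ij} \in e_i R e_j$ with $e_{ij} e_{ji} = e_i$. These are not unique, so they must be produced over $K$. Lift a Weyl element $w \in G(K)$ to $\tilde{w} \in \Unit(R, \Delta)$, which is possible since its unipotent factors lift. If $\tilde{w} e_i \tilde{w}^{-1} = e_j$, take $e_{ji} = e_j \tilde{w} e_i$ and $e_{ij} = e_i \tilde{w}^{-1} e_j$. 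Descent is fine for identities such as $(e_i, 0) \in \Delta$, but not for producing elements.

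Similarly, your step ``the kernel is finitely generated projective, so $\Lambda$ is a direct summand'' is a non sequitur: a finitely generated projective submodule need not be a direct summand. $\Lambda$ is a direct summand because this holds after the splitting base change and the property descends.
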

\begin{proof}
	In the first case
	\( G \) is an inner twisted form of
	\( \ProjGenLin_{n + 1} \), so it is the automorphism group scheme of an Azumaya algebra
	\( R \). Existence of a split torus with root subgroups means that
	\( R \) is a generalized matrix ring, i.e.\ there is a complete family of orthogonal idempotents
	\( e_{0 0}, \ldots, e_{n n} \in R \). Moreover, existence of Weyl elements for basic roots easily implies that there are
	\begin{align*}
			e_{i, i + 1}
			&\in
			e_{i i} R e_{i + 1, i + 1},
		&
			e_{i + 1, i}
			&\in
			e_{i + 1, i + 1} R e_{i i}
	\end{align*}
	for
	\( 0 \leq i < n \) such that
	\begin{align*}
			e_{i, i + 1} e_{i + 1, i}
			&=
			e_{i i},
		&
			e_{i + 1, i} e_{i, i + 1}
			&=
			e_{i + 1, i + 1}.
	\end{align*}
	It follows that
	\( R \cong \Mat(n + 1, S) \) for
	\( S = e_{0 0} R e_{0 0} \).
	
	Now consider the case
	\( \TitsIndex{B}{}{n}{r}{} \). By lemma
	\ref{twi-cla} we have
	\( G = \SpecOrthSch(M) \) for some projective module
	\( M \) of constant rank
	\( 2 \ell + 1 \) with semi-regular traditional quadratic form
	\( q \). Existence of root subgroups induces a decomposition
	\( M = \bigoplus_{i = - r}^r M_i \), where
	\( M_i \) have rank
	\( 1 \) for
	\( i \neq 0 \),
	\( M_0 \) has rank
	\( 2 n - 2 r + 1 \),
	\( M_i \perp M_j \) unless
	\( i = - j \),
	\( q(M_i) = 0 \), and the symmetric bilinear form
	\( B \) induces isomorphisms
	\( M_{- i} \cong \Hom_K(M_i, K) = M_i^{\vee} \) for
	\( i \neq 0 \). As in the first case, existence of basic Weyl elements with long roots means that up to isomorphism
	\( M_1 = \ldots = M_r = L \) and
	\( M_{- 1} = \ldots = M_{- r} = L^{\vee} \) for a projective module
	\( L \) of constant rank
	\( 1 \). Finally, short basic root subgroups are parameterized by
	\( M_0 \otimes_K L \) and
	\( M_0 \otimes_K L^{\vee} \), and short Weyl elements exists if and only if there is
	\(
		x
		=
		\sum_{i = 1}^N
			m_i \otimes l_i
		\in
		M_0 \otimes_K L
	\) such that
	\[
			\widehat{q}(x)
		=
			\sum_{i = 1}^N
				q(m_i)\, l_i \otimes l_i
			+
			\sum_{ 1 \leq i < j \leq N }
				B(m_i, m_j)\, l_i \otimes l_j
		\in
			L \otimes_K L
	\]
	is invertible. Existence of such element means that
	\( L \otimes_K L \) has a basis, so
	\( L \cong L^{\vee} \). Finally, we can replace
	\( M \) with
	\( M \otimes_K L \) and
	\( q \) with
	\( \widehat{q} \) via the above formula to make all
	\( M_i \) free modules for
	\( i \neq 0 \). Such a replacement clearly preserves the odd orthogonal group.
	
	In the remaining cases let
	\( (R, \Delta) \) be the associated odd form ring from lemma
	\ref{twi-cla}. Below we use that it is a twisted form of one of the known ``split'' odd form algebras from
	\cite[\S 5]{twi-for-cla}, i.e.\ the ones constructed by the corresponding split quadratic modules. The set of root subgroups corresponds to an orthogonal hyperbolic family
	\( e_{- r}, \ldots, e_0, \ldots, e_r \in R \), where
	\( e_i R e_i \) is an Azumaya algebra over
	\( K \) of rank
	\( d \) for
	\( \mathsf{X} \in \{ \mathsf{C}, \mathsf{D} \} \) and an Azumaya algebra over a quadratic \'etale extension of
	\( K \) of rank
	\( d \) for
	\( \mathsf{X} = \mathsf{A} \). As in the first case, existence of Weyl elements with short roots (or long roots for
	\( \TitsIndex{D}{s}{n}{r}{ (1) } \)) means that there are
	\( e_{i j} \in e_i R e_j \) such that
	\( e_i = e_{i i} \) and
	\( e_{i j} e_{j k} = e_{i k} \) for
	\( i \),
	\( j \),
	\( k \) simultaneously positive or negative.
	
	Assume that the Tits index is
	\( \TitsIndex{D}{s}{n}{r}{ (1) } \). Then
	\( e_i R e_i \cong K \) and all root elements
	\( T_i(0, x) \) are trivial, so
	\( x = \inv{x} \) for
	\( x \in e_{- i} R e_i \),
	\( i \neq 0 \). Existence of remaining basic Weyl elements (both for
	\( r < d \) and for
	\( r = d \)) implies that there are
	\( e_{\mp 1, \pm 1} \in e_{\mp 1} R e_{\pm 1} \) such that
	\( e_{\pm 1, \mp 1} e_{\mp 1, \pm 1} = e_1 \). Then the module
	\( M = e_0 R e_1 \) has traditional non-degenerate quadratic form
	\( q(m) \) such that
	\( ( m, - e_{- 1, 1} q(m) ) \in \Delta \) and
	\( (R, \Delta) \) is constructed by
	\( M \perp \Hyp(K^r) \), so
	\( \Unit(R, \Delta) = \Unit(r, M) \) is an orthogonal group.
	
	For remaining Tits indices there is a basic Weyl element
	\[
		T_1( 0, e_{- 1, 1} )\,
		T_{- 1}( 0, - e_{1, - 1} )\,
		T_1( 0, e_{- 1, 1} )
	\]
	with long root, where necessarily
	\( e_{\pm 1, \mp 1} e_{\mp 1, \pm 1} = e_{\pm 1} \). Since
	\( ( 0, e_{\mp 1, \pm 1} ) \in \Delta \), we have
	\( \inv{ e_{\mp 1, \pm 1} } = - e_{\mp 1, \pm 1} \). The algebra
	\( S = e_1 R e_1 \) has the
	\( K \)-linear involution
	\( x \mapsto x^* = e_{1, - 1} \inv{x} e_{- 1, 1} \) and the form parameter
	\(
		\Lambda
		=
		\{
			x \in S
			\mid
			( 0, e_{- 1, 1} x ) \in \Delta
		\}
	\) with respect to
	\( \lambda = - 1 \). The
	\( S \)-module
	\( M = e_0 R e_1 \) has non-degenerate hermitian form
	\( B(m, m') = e_{1, - 1} \inv{m} m' \) and quadratic form
	\(
		q(m)
		=
		\{
			x \in S
			\mid
			( m, - e_{- 1, 1} x ) \in \Delta
		\}
	\), so the corresponding odd form parameter is
	\(
		\OddFormPar
		=
		\{
			(m, x) \in M \times S
			\mid
			(m, e_{- 1, 1} x) \in \Delta
		\}
	\). It is easy to check that
	\( (R, \Delta) \) is the odd form ring constructed by
	\( M \perp \Hyp(S^r) \), so
	\( \Unit(R, \Delta) = \Unit(r, M) \).
\end{proof}

For simple group schemes with absolute root systems of types
\( \RootSys{A}{\ell} \) and
\( \RootSys{C}{\ell} \) the odd form parameter
\( \Delta \) is the maximal one. It follows that in the isotropic case the odd form parameter
\( \OddFormPar \) is also the maximal one, equivalently,
\( \Lambda = \Lambda_{\max} \).

\section{%
	Solvability of%
	\texorpdfstring{
		\( \KFunc_1 \)
	}{K1}%
}

\begin{lemma}
	\label{lin-ab-k1}
	If
	\( S \) be a semilocal associative unital ring, then
	\[
		\KFunc_1(n, S) \to \KFunc_1(n + 1, S)
	\]
	is surjective for
	\( n \geq 1 \) and injective for
	\( n \geq 2 \). Moreover,
	\[
		\KFunc_1(2, S)
		\cong
		\KFunc_1(3, S)
		\cong
		\ldots
	\]
	is abelian.
\end{lemma}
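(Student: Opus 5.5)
We prove the two claims in turn. Surjective stability reduces to finding transvections, injectivity to a Whitehead-type argument. Throughout, \( \KFunc_1(n, S) = \GenLin(n, S) / \Elem(n, S) \) as a set, with \( \Elem(n, S) \) generated by elementary transvections. Let \( \Jac \) denote the Jacobson radical of \( S \), so that \( \overline{S} = S / \Jac \) is semisimple Artinian.

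\textbf{Surjectivity for \( n \geq 1 \).} The key fact is that \( S \) has stable rank \( 1 \) (Bass). Indeed, if \( a S + b S = S \), then there is \( t \) with \( a + b t \in S^* \). This is checked modulo \( \Jac \), where it holds because semisimple Artinian rings have stable rank \( 1 \); invertibility then lifts through \( \Jac \).

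Take \( g \in \GenLin(n + 1, S) \). Its last column is unimodular. Using stable rank \( 1 \) repeatedly, we apply elementary transformations to make one entry of this column invertible. We then clear the other entries of the column and of the corresponding row by further elementary operations. This moves \( g \) into \( \GenLin(n, S) \cdot \Elem(n + 1, S) \), which gives surjectivity.

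\textbf{Injectivity for \( n \geq 2 \).} We need \( \GenLin(n, S) \cap \Elem(n + 1, S) \subseteq \Elem(n, S) \). The cleanest route is Vaserstein's theorem that \( \KFunc_1(n, S) \to \KFunc_1(S) \) is injective for \( n \geq \operatorname{sr}(S) + 1 \). Alternatively, one can argue directly for semilocal rings. In that argument we show that \( \GenLin(n, S) = \Elem(n, S) \cdot \mathrm{D}(S) \) for \( n \geq 2 \), where \( \mathrm{D}(S) = \{ \operatorname{diag}(u, 1, \ldots, 1) \} \). This follows by the same column-reduction as above, together with the Whitehead lemma, which lets us push diagonal matrices into the first entry. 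The question then becomes: when does \( \operatorname{diag}(u, 1, \ldots) \) lie in \( \Elem(N, S) \) for large \( N \)?

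This is the main obstacle. We would use the known description \( \KFunc_1(S) \cong S^* / [S^*, S^*] \) for semilocal rings, with an adjustment when \( \overline{S} \) has factors \( \Mat(1, \Field_2) \) or \( \Field_2 \) (Vaserstein). We would then check that the map \( \mathrm{D}(S) \to \KFunc_1(n, S) \) already kills the same subgroup for \( n = 2 \). The commutators \( \operatorname{diag}([u, v], 1) \) lie in \( \Elem(2, S) \) by the Whitehead-type identity \( \operatorname{diag}(u, u^{-1}) \in \Elem(2, S) \). The delicate case is small residue fields; there we would fall back on citing Vaserstein's injective stability.

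\textbf{Abelian.} From injectivity, \( \KFunc_1(2, S) \cong \KFunc_1(3, S) \cong \ldots \cong \KFunc_1(S) \). In particular, \( \Elem(n, S) \) is normal for \( n \geq 2 \), since it is the kernel of the map to \( \KFunc_1(S) \). Finally, \( \KFunc_1(S) \) is abelian by the Whitehead lemma: for \( g, h \in \GenLin(n, S) \), the commutator \( [g \oplus 1, h \oplus 1] \) lies in \( \Elem(2n, S) \), because \( \operatorname{diag}(g, g^{-1}) \in \Elem(2n, S) \).
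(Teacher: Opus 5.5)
Your proposal is correct and follows the paper's route. The paper just cites Bass's stability theorems (semilocal rings have stable rank $1$); you prove surjectivity by hand from stable rank $1$, cite Vaserstein's injective stability for $n \geq \mathrm{sr}(S) + 1 = 2$, and correctly deduce normality of $\Elem(n, S)$ and commutativity via the Whitehead lemma. Your sketched ``direct'' alternative for injectivity is incomplete, and the exceptional residue factors there should read $\Field_2$ and $\Mat(2, \Field_2)$. Since you explicitly fall back on the citation, this does not affect the argument.
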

\begin{proof}
	This is a special case of
	\cite[theorems V.4.1(b) and V.4.2]{k-theory}.
\end{proof}

\begin{lemma}
	\label{uni-ab-k1}
	Let
	\( S \) be a semilocal odd form ring and
	\( (M, B, q) \) a quadratic
	\( S \)-module. Then
	\[
		\KUnit_1(2 \ell, M)
		\to
		\KUnit_1(2 \ell + 2, M)
	\]
	is surjective for
	\( \ell \geq 1 \) and injective for
	\( \ell \geq 2 \). If in addition
	\( M_S \) is free of finite rank and
	\( B \) is non-degenerate, then
	\[
		\KUnit_1(4, M)
		\cong
		\KUnit_1(6, M)
		\cong
		\ldots
	\]
	is abelian.
\end{lemma}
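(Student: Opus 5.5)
The plan follows the proof of Lemma \ref{lin-ab-k1} (Bass's stability theorems for semilocal rings), transplanted to odd unitary groups. Both stability statements are special cases of the surjective and injective stability theorems for odd unitary \( \KFunc_1 \) in \cite{odd-uni-gro}. Those theorems are stated in terms of \( \Lambda \)-stable (or unitary stable) rank conditions, and a semilocal ring satisfies them with the smallest possible bounds: its stable rank is \( 1 \), and \( S / \Jac(S) \) is a finite product of matrix rings over division rings. I will first check that \( S \) semilocal gives the unitary stable rank condition with constant \( 1 \). Then surjective stability holds from \( \ell \geq 1 \), and injective stability from \( \ell \geq 2 \), which matches the \( \KFunc_1(n) \) bounds in Lemma \ref{lin-ab-k1}. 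Concretely, for surjectivity I take \( g \in \Unit(2 \ell + 2, M) \) and consider the column \( g e_{\ell + 1} \). Using stable rank \( 1 \) modulo \( \Jac(S) \), I find an elementary unitary transformation (short root elements with \( i, j \neq 0 \), and ultrashort \( T_i(y, z) \) to clear the \( M \)-component) that brings this column to \( e_{\ell + 1} \). Then I correct the row by the usual unipotent argument, so that the corrected \( g \) lies in \( \Unit(2 \ell, M) \).

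For injectivity I will use the standard argument: analyse the stabilizer of \( e_{\ell + 1} \) and of the corresponding row, and show that \( \Unit(2 \ell, M) \cap \ElemUnit(2 \ell + 2, M) = \ElemUnit(2 \ell, M) \). To do this I write an elementary element as a product of a unipotent radical part and a Levi part, in a Gauss-type decomposition relative to the parabolic stabilizing \( e_{\ell + 1} S \). This works since \( \ElemUnit \) is normal for \( \ell \geq 2 \) by \cite[theorem 4]{odd-uni-gro}.

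For the second claim, assume \( M_S \) is free and \( B \) is non-degenerate. Then \( M \perp \Hyp(S^\ell) \) embeds, after adding \( \Hyp(M) \), into a hyperbolic module: \( M \perp \Hyp(M) \cong \Hyp(S^m) \perp M \) is again of the same shape. More usefully, \( M \perp (-M) \) is hyperbolic (Lemma \ref{qua-eve} lets me handle the quadratic part through a sesquilinear form). So every element of \( \Unit(2 \ell, M) \) commutes, modulo elementary matrices, with its image acting on a stabilized copy. I then run the Whitehead lemma. For \( g, h \in \Unit(2 \ell, M) \), the matrices \( \mathrm{diag}(g, 1) \) and \( \mathrm{diag}(1, h) \) on \( (M \perp \Hyp(S^\ell)) \perp (M \perp \Hyp(S^\ell)) \) become conjugate, via a Weyl-type element, to elements acting on disjoint hyperbolic-plus-\( M \) summands. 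Their commutator is therefore trivial, and \( [g, h] \) is elementary in a larger unitary group. By injective stability it is already elementary in \( \Unit(4, M) \), because the doubled module is \( M \perp M \perp \Hyp \), and non-degeneracy with freeness gives \( M \perp M' \cong \Hyp(S^k) \) for a suitable \( M' \).

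I expect the main obstacle to be exactly this Whitehead step. The swap of the two copies of \( M \) must be realized inside \( \ElemUnit \). I will do this by showing that the permutation is a product of short and ultrashort root elements, which uses that \( M \) is free with non-degenerate \( B \), so that \( M \perp M \) contains a hyperbolic summand identifying the two copies. The orientation or sign issue will be handled by the standard trick of using \( g^{-1} \) on the second copy.
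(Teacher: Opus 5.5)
The stability part matches the paper, which also just cites it; note that the paper takes injective stability from \cite[theorem 1.1]{bc-inj-sta} rather than from \cite{odd-uni-gro}. The gap is in the abelianness claim, at exactly the step you flag as the main obstacle.

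In the doubled module \( D = (M \perp \Hyp(S^\ell)) \perp (M \perp \Hyp(S^\ell)) \) the elements \( \mathrm{diag}(g, 1) \) and \( \mathrm{diag}(1, h) \) commute for trivial reasons. What you actually need is that \( \mathrm{diag}(h, 1) \) and \( \mathrm{diag}(1, h) \) are conjugate by an elementary element. You propose to get this by writing the swap of the two copies as a product of root elements, because ``\( M \perp M \) contains a hyperbolic summand identifying the two copies''. That is false. It is \( M \perp (-M) \) that is hyperbolic, with the diagonal as a Lagrangian; \( M \perp M \) may be anisotropic, e.g.\ \( x^2 + y^2 \) over \( \Real \). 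The swap is also not elementary in general. Take \( S = K \) a field of characteristic \( \neq 2 \) and \( M = \langle a \rangle \). Then the swap is the reflection in \( m_1 - m_2 \) (with \( m_1, m_2 \) spanning the two copies of \( M \)) times an even permutation of the hyperbolic basis. So it has determinant \( -1 \) and does not even lie in \( \SpecOrth \). Finally, returning from \( D \) to \( M \perp \Hyp(S^{\ell'}) \) is not an instance of injective stability, which only adds hyperbolic planes to the same \( M \). You would have to add \( -M \) and compare elementary subgroups under \( M \perp (-M) \cong \Hyp(S^n) \).

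The missing idea is that no elementary swap is needed. It suffices to find some elementary \( \tau \) with \( \tau(X) \subseteq X^{\perp} \) for \( X = e_{-1} S \oplus M \oplus e_1 S \), and you get one by correcting an arbitrary isometry with this property. The paper works in the single module \( Y = M \perp \Hyp(S^\ell) \) with \( \ell \) large:
\begin{itemize}
\item Since \( M \perp (-M) \) is hyperbolic (your use of Lemma~\ref{qua-eve} there is right), \( X \) embeds isometrically into the hyperbolic summand spanned by \( e_{\pm 2}, \ldots, e_{\pm \ell} \).
\item Witt cancellation \cite[theorem 1]{ove-uni} extends this embedding to some \( \sigma \in \Unit(Y) \).
\item Surjective stability gives \( \Unit(Y) = \ElemUnit(Y)\, \Unit(X) \). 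Hence \( \tau = \sigma \rho \) is elementary for a suitable \( \rho \in \Unit(X) \), and still \( \tau(X) = \sigma(X) \subseteq X^{\perp} \).
\item For \( g_1, g_2 \in \Unit(X) \), the elements \( g_1 \) and \( \up{\tau}{g_2} \) have orthogonal supports. So \( [g_1, g_2] \equiv [g_1, \up{\tau}{g_2}] = 1 \) modulo the normal subgroup \( \ElemUnit(Y) \).
\item By surjective stability \( \Unit(X) \) maps onto \( \KUnit_1(2 \ell, M) \), so this group is abelian. Injective stability then transfers this to \( \KUnit_1(4, M) \).
\end{itemize}
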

\begin{proof}
	The first claim is a special case of
	\cite[theorem 5]{odd-uni-gro} and
	\cite[theorem 1.1]{bc-inj-sta}. To prove the second claim we only have to check
	\(
		[ \Unit(1, M), \Unit(1, M) ]
		\leq
		\ElemUnit(2 \ell, M)
	\)
	for sufficiently large
	\( \ell \). The idea is to embed
	\( M \) into a hyperbolic space as in
	\cite[5.1.19]{hah-o-mea} and move this space into sufficiently high hyperbolic planes of
	\( M \oplus \Hyp(S^\ell) \) as in Whitehead lemma, see e.g.\ %
	\cite[5.4.3]{hah-o-mea} for the case of even unitary groups.
	
	Consider the quadratic module
	\( M' = M \oplus M \) with the forms
	\begin{align*}
			B( m_1 \oplus m_2, n_1 \oplus n_2 )
			&=
			B(m_1, n_1) - B(m_2, n_2),
		&
			q( m_1 \oplus m_2 )
			&=
			q(m_1) - q(m_2).
	\end{align*}
	This module is hyperbolic. Namely, choose a sesquilinear form
	\( Q \colon M \times M \to S \) generating
	\( B \) and
	\( q \) using lemma
	\ref{qua-eve}. Since
	\( B \) is non-degenerate, there is a linear map
	\( f \colon M \to M \) such that
	\( B(f(m), n) = - Q(m, n) \), so
	\( B(m, f(n)) = - Q(n, m)^* \lambda \). Consider the submodules
	\begin{align*}
			N_1
			&=
			\{ m \oplus m \mid m \in M \},
		&
			N_2
			&=
			\{ (m + f(m)) \oplus f(m) \mid m \in M \}
	\end{align*}
	of
	\( M' \). Clearly, 
	\( M' = N_1 \oplus N_2 \) and the forms on
	\( M' \) are generated by the sesquilinear form
	\[
		Q'\bigl(
				(m_1 \oplus m_1)
				+
				\bigl(
					( m_2 + f(m_2) ) \oplus f(m_2)
				\bigr)
			,
				(m_1' \oplus m_1')
				+
				\bigl(
					( m_2' + f(m_2') ) \oplus f(m_2')
				\bigr)
		\bigr)
		=
		B(m_1, m_2').
	\]
	This sesquilinear form induces an isomorphism
	\( N_1 \cong \Hom_S(N_2, S) \).
	
	It follows that
	\( e_{- 1} S \oplus M \oplus e_1 S \) is a direct summand of
	\(
		\Hyp( S^{\ell - 1} )
		=
		\bigoplus_{i = - \ell}^{- 2}
			e_i S
		\oplus
		\bigoplus_{i = 2}^\ell
			e_i S
	\) for sufficiently large
	\( \ell \geq 2 \) via an embedding
	\(
		u
		\colon
		e_{- 1} S \oplus M \oplus e_1 S
		\to
		\Hyp( S^{ \ell - 1 } )
	\). By Witt cancellation theorem
	\cite[theorem 1]{ove-uni} there is
	\( h \in \Unit(2 \ell, M) \) such that
	\( h|_{ e_{- 1} S \oplus M \oplus e_1 S } = u \). Again applying stability we can assume that
	\( h \in \ElemUnit(2 \ell, M) \). Now if
	\( g_1, g_2 \in \Unit(1, M) \), then
	\[
		[g_1, g_2]
		\equiv
		[ g_1, \up{h}{g_2} ]
		=
		1
		\pmod{ \ElemUnit(2 \ell, M) }.
		\qedhere
	\]
\end{proof}

To deal with the case
\( \TitsIndex{E}{1}{6}{2}{28} \) in theorem
\ref{sol-sem-iso} below we need the following lemma about orthogonal groups of isotropic rank
\( 1 \). Let
\( M \) be a finite projective module of constant even rank over
\( K \) with a traditional non-degenerate quadratic form
\( q \colon M \to K \). Recall that
\( \SpecOrth(q) \leq \Orth(q) \) is the subgroup of elements with trivial Dickson invariant, it is a normal subgroup containing
\( [ \Orth(q), \Orth(q) ] \). A
\textit{reflection} in
\( \Orth(q) \) is an element
\(
	s_v
	\colon
	m
	\mapsto
	m - \frac{ B(m, v) }{ q(v) } v
\), where
\( v \in M \) and
\( q(v) \in K^* \). Let
\( \Refl(q) \leq \Orth(q) \) be the subgroup generated by reflections and
\( \Refl^{+}(q) \leq \Refl(q) \cap \SpecOrth(q) \) the subgroup generated by products of pairs of reflections.

\begin{lemma}
	\label{ort-ab-k1}
	Let
	\( M \neq 0 \) be a free module of finite even rank over a semilocal ring
	\( K \) with a traditional non-degenerate quadratic form
	\( q \colon M \to K \). Then
	\( [ \Orth(q), \Orth(q) ] \leq \Refl^{+}(q) \) and the factor-group
	\( \mathrm{KO}_1(2, q) \) of
	\( \Orth( q \perp \Hyp(K) ) \) by its elementary subgroup is abelian.
\end{lemma}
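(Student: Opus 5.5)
The plan has two parts. First, show \( [ \Orth(q), \Orth(q) ] \leq \Refl^{+}(q) \) using the fact that orthogonal groups of non-degenerate forms over semilocal rings are generated by reflections. Then run the Whitehead-lemma argument of lemma \ref{uni-ab-k1} inside \( \Orth( q \perp \Hyp(K) ) \) to kill commutators of products of two reflections modulo the elementary subgroup.

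For the first part I would invoke the classical theorem (Klingenberg, Baeza, Kirchheimer) that over a semilocal ring \( \Orth(q) = \Refl(q) \) for non-degenerate \( q \) of positive rank. Possible exceptions occur only for small residue fields such as \( \Field_2 \), \( \Field_3 \) in low rank. If needed, I would handle those directly or replace reflections by Eichler--Siegel transformations.

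Granting generation by reflections, \( \Refl^{+}(q) \) has index at most \( 2 \) in \( \Refl(q) \), since every element is a product of reflections and \( \Refl^{+}(q) \) is the subgroup of even-length products. It is normal, because \( g s_v g^{-1} = s_{g v} \). The quotient is abelian, so \( [ \Orth(q), \Orth(q) ] \leq \Refl^{+}(q) \).

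For the second part, put \( G = \Orth( q \perp \Hyp(K) ) \) with elementary subgroup \( E \). This is the odd unitary setting with \( \ell = 1 \), and \( E \) is generated by the root subgroups of the \( \RootSys{B}{1} \)/\( \RootSys{BC}{1} \) system with short parameters in \( M \). I need two facts:
\begin{itemize}
	\item[(a)] \( G = E \cdot \Orth(q) \);
	\item[(b)] \( [ \Refl^{+}(q), \Orth(q) ] \leq E \) and \( [E, G] \leq E \).
\end{itemize}
Granting these, \( G / E \) is a quotient of \( \Orth(q) \) in which all commutators vanish, hence abelian.

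For (a) I would use a Gauss-type decomposition over the semilocal ring. Given \( g \in G \), the vector \( g e_1 \) is unimodular and isotropic. Using transitivity of \( E \) on unimodular isotropic vectors over semilocal rings (Witt-type transitivity from \cite{ove-uni}, or a direct argument choosing \( m \in M \) with \( q(m) \) generic in each residue field), one moves it back to \( e_1 \) and then \( e_{-1} \) back to \( e_{-1} \). After this, \( g \) lies in \( E \cdot \Orth(q) \).

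Normality of \( E \) at rank \( \ell = 1 \) is not covered by \cite[theorem 4]{odd-uni-gro}, so it must be checked by hand. I would do this via (a): it suffices to see that \( \Orth(q) \) normalizes \( E \), which is clear since \( h \in \Orth(q) \) conjugates root elements \( T_1(m, \ast) \) to \( T_1(h m, \ast) \).

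The key computation is (b): show \( s_u s_v \in E \cdot Z \) for some \( Z \) centralizing \( \Orth(q) \) modulo \( E \), or more directly that \( [ s_u s_v, h ] \in E \). I would try first an explicit identity. For \( v \in M \) with \( q(v) \) invertible, the reflection \( s_v \) extended to \( M \perp \Hyp(K) \) coincides, modulo \( E \), with the reflection in \( e_1 + q(v) e_{-1} \), because \( v \) and \( e_1 + q(v) e_{-1} \) have the same norm and differ by an elementary transformation. This transitivity on vectors of a fixed invertible norm I expect to be the main obstacle, in particular at small residue fields. The reflection in \( e_1 + q(v) e_{-1} \) commutes with \( \Orth(q) \). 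Hence every \( s_v \) is congruent modulo \( E \) to an element commuting with \( \Orth(q) \). It follows that \( [ s_u s_v, h ] \in E \) for all \( h \in \Orth(q) \), and commutators of products of pairs of reflections are trivial modulo \( E \), completing the proof.
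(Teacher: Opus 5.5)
Your overall strategy matches the paper's. You put commutators of $\Orth(q)$ among products of reflections, then use a Weyl element to replace each reflection $s_v$, modulo $E$, by a transformation of the hyperbolic plane that commutes with $\Orth(q)$. The genuine gap is the case you postpone: some residue field $F_i$ of $K$ is $\Field_2$ and $q_{F_i}\cong\Hyp(F_i^2)$. This is the Cartan--Dieudonn\'e exception, where $\Orth(q)$ need not be generated by reflections.

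\textbf{The exceptional case, first claim.} There your index-two argument only proves $[\Refl(q),\Refl(q)]\le\Refl^{+}(q)$. The paper handles this case by an explicit computation in $\Orth(q_{F_i})\cong(\Sym_3\times\Sym_3)\rtimes\Int/2\Int$, where $\Refl(q_{F_i})$ has index $2$. It then lifts to $K$, using that $\Refl^{+}(q)\to\prod_i\Refl^{+}(q_{F_i})$ is onto and that the kernel of $\Orth(q)\to\Orth(q_{K/\Jac(K)})$ lies in $\Refl^{+}(q)$ \cite[theorem 4.2]{ort-sem}.

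\textbf{The exceptional case, second part.} The same case breaks your deduction. From $[\Orth(q),\Orth(q)]\le\Refl^{+}(q)$ and $[\Refl^{+}(q),\Orth(q)]\le E$ you only get that the image of $\Orth(q)$ in $G/E$ is nilpotent of class at most two. Being central modulo $E$ is not the same as lying in $E$: modulo $E$ the element $s_u s_v$ becomes $\mathrm{diag}(q(v)/q(u),1,q(u)/q(v))$ on $e_{-1}K\oplus M\oplus e_1K$. What your last paragraph actually proves is that the image of every single reflection is central in $G/E$. Together with $\Orth(q)=\Refl(q)$ this does make $G/E$ abelian, and then the first claim is not even used. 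But when $\Orth(q)\neq\Refl(q)$ it says nothing about commutators of elements outside $\Refl(q)$, so that case is left open.

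\textbf{Claim (a) is false.} $E$ is not transitive on unimodular isotropic vectors. Take $K=\Real$ and $q=x^2+y^2$, so $q\perp\Hyp(K)$ has signature $(3,1)$. The group $E$ is generated by the one-parameter subgroups $t\mapsto T_{\pm}(tu)$, so it lies in the identity component. $\Orth(q)$ fixes the timelike vector $e_{-1}-e_1$, so $E\cdot\Orth(q)$ preserves the time orientation. The element acting as $-1$ on $\Hyp(K)$ and trivially on $M$ reverses it, so it is not in $E\cdot\Orth(q)$.

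The correct statement is the paper's Gauss decomposition, which is where $M\neq0$ is used. $G$ is generated by $E$ and the block-diagonal matrices $\mathrm{diag}(a,g,a^{-1})$ with $a\in K^*$ and $g\in\Orth(q)$. This repair is harmless, since the torus part commutes with $\Orth(q)$. It is also what gives normality of $E$.

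\textbf{The ``main obstacle'' is a direct computation.} The congruence $s_v\equiv s_{e_1+q(v)e_{-1}}\pmod E$ needs no transitivity result. With the paper's $T_{\pm}$ one checks directly that $T_{+}(v)\,T_{-}(v/q(v))\,T_{+}(v)=s_{e_1+q(v)e_{-1}}\,s_v$. This is exactly the paper's Weyl element computation.
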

\begin{proof}
	Firstly suppose that
	\( K \) is a field. By the classical Cartan--Dieudonn\'e theorem
	\( \Orth(q) = \Refl(q) \) and
	\( \SpecOrth(q) = \Refl^{+}(q) \) unless
	\( K \cong \Field_2 \) has two elements and
	\( M \cong \Hyp(K^2) \). In the exceptional case
	\(
		\Orth(q)
		\cong
		( \Sym_3 \times \Sym_3 )
		\rtimes
		\Int / 2 \Int
	\), the six reflections correspond to the elements
	\( ( \sigma, \sigma^{- 1} ) \rtimes [1] \) (the unique conjugacy class in
	\( ( \Sym_3 \times \Sym_3 ) \rtimes [1] \) with six elements), and under this isomorphism
	\begin{align*}
			\SpecOrth(q) &\cong \Sym_3 \times \Sym_3,
		\\
			\Refl(q)
			&\cong
			\{
				(\sigma, \tau) \in \Sym_3 \times \Sym_3
				\mid
				\sigma \tau
				\text{ is even}
			\}
			\rtimes
			\Int / 2 \Int,
		\\
			\Refl^{+}(q)
			&\cong
			\{
				(\sigma, \tau) \in \Sym_3 \times \Sym_3
				\mid
				\sigma \tau
				\text{ is even}
			\}.
	\end{align*}
	So for all fields
	\( [ \Orth(q), \Orth(q) ] \leq \Refl^{+}(q) \).
	
	Now return to the general case. Let
	\( \Jac(K) \leqt K \) be the Jacobson radical, so
	\( K / \Jac(K) = \prod_{i = 1}^n F_i \) is a product of fields. The map
	\[
		\Orth(q)
		\to
		\Orth( q_{ K / \Jac(K) } )
		=
		\prod_{i = 1}^n
			\Orth( q_{F_i} )
	\]
	induces the surjection
	\[
		\Refl^{+}(q)
		\to
		\prod_{i = 1}^n
			\Refl^{+}( q_{F_i} ).
	\]
	By
	\cite[theorem 4.2]{ort-sem} the kernel of
	\( \Orth(q) \to \Orth( q_{ K / \Jac(K) } ) \) is contained in
	\( \Refl^{+}(q) \), so the first claim follows from the field case.
	
	To prove the second claim note that
	\begin{align*}
			T_{+}(u)
			&=
			\Bigl(
				\begin{smallmatrix}
						1 & - B(u, {-}) & - q(u)
					\\
						0 & 1 & u
					\\
						0 & 0 & 1
				\end{smallmatrix}
			\Bigr),
		&
			T_{-}(u)
			&=
			\Bigl(
				\begin{smallmatrix}
						1 & 0 & 0
					\\
						u & 1 & 0
					\\
						- q(u) & - B(u, {-}) & 1
				\end{smallmatrix}
			\Bigr),
	\end{align*}
	for
	\( u \in M \) lie in the elementary subgroup of
	\( e_{- 1} K \oplus M \oplus e_1 K \) and
	\[
		T_{+}(v)\, T_{-}( v / q(v) )\, T_{+}(v)
		=
		\Bigl(
			\begin{smallmatrix}
					0 & 0 & q(u)
				\\
					0 & S_v & 0
				\\
					- 1 / q(u) & 0 & 0
			\end{smallmatrix}
		\Bigr).
	\]
	By the Gauss decomposition
	\( \Orth( q \perp \Hyp(K) ) \) is generated by
	\( T_{\pm}(u) \) and diagonal matrices (here we use that
	\( M \neq 0 \)), so the result follows from the first claim.
\end{proof}

Another non-trivial case in theorem
\ref{sol-sem-iso} below is
\( \TitsIndex{E}{}{7}{2}{31} \), this requires even more preliminary results. We need two classes of form algebras over
\( K \). Let us call a form ring
\( S \) a
\textit{%
	quadratic form
	\( K \)-algebra%
} if
\( S \) is a quadratic \'etale
\( K \)-algebra with the standard involution (the only geometrically non-trivial
\( K \)-linear involution),
\( \lambda = - 1 \), and
\( \Lambda = \Lambda_{\min} = \Lambda_{\max} = K \). In other words,
\( S \) is a twisted form of
\( K \times K \) with
\begin{align*}
		(x, y)^* &= (y, x),
	&
		\lambda &= - 1,
	&
		\Lambda &= \{ (x, x) \mid x \in K \}.
\end{align*}
Similarly, a form ring
\( S \) is called a
\textit{%
	quaternion form
	\( K \)-algebra%
} if
\( S \) is a quaternion
\( K \)-algebra with the standard involution
\( x^* = \mathrm{tr}(x) - x \),
\( \lambda = - 1 \), and
\( \Lambda = \Lambda_{\min} = K \). Such form algebras are precisely twisted forms of
\( \Mat(2, K) \) with
\begin{align*}
		\sMat{x}{y}{z}{w}^* &= \sMat{w}{- y}{- z}{x},
	&
		\lambda &= - 1,
	&
		\Lambda
		&=
		\{ \sMat{x}{0}{0}{x} \mid x \in K \}.
\end{align*}

\begin{lemma}
	\label{her-dia}
	Let
	\( S \) be a quadratic or quaternion form algebra over semilocal
	\( K \) and
	\( (M, B, q) \) a quadratic module over
	\( S \). Suppose that
	\( M \) is projective of constant finite rank
	\( n \) over
	\( S \) (i.e.\ of rank
	\( 2 n \) or
	\( 4 n \) over
	\( K \) respectively) and
	\( B \) is non-degenerate. Then
	\( M \) has an orthogonal basis
	\( e_1, \ldots, e_n \in M \), i.e.\ %
	\( B(e_i, e_j) = 0 \) for
	\( i \neq j \).
\end{lemma}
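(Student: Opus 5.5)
Prove this by induction on \( n \). The key step is to find one vector \( e \in M \) such that \( B(e, e) \in S \) is invertible, i.e.\ \( e S \) is a non-degenerate free submodule of rank \( 1 \). Then split \( M = e S \perp e^{\perp} \), where \( e^{\perp} = \{ m \mid B(e, m) = 0 \} \), via the projection \( m \mapsto e\, B(e, e)^{- 1} B(e, m) \). Here \( e^{\perp} \) is projective of constant rank \( n - 1 \), the restriction of \( B \) to it is non-degenerate, and the induction hypothesis applies. Note that with \( \lambda = - 1 \) the value \( B(m, m) = q'(m) - q'(m)^* \) is skew, \( B(m, m)^* = - B(m, m) \).

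For a quadratic algebra \( S = K \times K \) the skew elements are \( (x, - x) \). For a quaternion algebra they are the trace-zero elements, and these need not be invertible. I therefore expect to use \( e \) with \( B(e, e) \) invertible, which for \( 2 \notin K^* \) is not automatic. For example, in the split quadratic case \( B(e, e) = (x, - x) \), and invertibility only requires \( x \in K^* \). Over a field, skew elements are invertible when non-zero in the quaternion division case and, in the split case, when their reduced norm is non-zero.

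The existence of \( e \) is the local-to-semilocal step. Reduce modulo the Jacobson radical: \( K / \Jac(K) = \prod F_i \), and \( M \otimes F_i \) is a non-degenerate hermitian module over \( S_{F_i} \). Invertibility of \( B(e, e) \) can be tested modulo \( \Jac(K) \), since the reduced norm is invertible iff it is invertible in each \( F_i \). Also, \( M \to \prod M_{F_i} \) is surjective. So it suffices to find, for each field \( F_i \), a vector \( e_i \in M_{F_i} \) with invertible \( B(e_i, e_i) \), and then lift. Since \( M \) is projective of constant rank over semilocal \( S \), it is in fact free over \( S \), which is not needed but is reassuring.

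The main obstacle is the field case. Given a non-degenerate \( \lambda = - 1 \) hermitian form over a quadratic étale algebra or a quaternion algebra over a field \( F \), we must produce a vector \( e \) with \( B(e, e) \) invertible. Suppose \( B(m, m) \) is never invertible. The map \( m \mapsto \operatorname{Nrd} B(m, m) \) is a polynomial function which, by hypothesis, vanishes on all \( F \)-points. If \( F \) is infinite, polarization \( B(m + n, m + n) = B(m, m) + B(m, n) - B(m, n)^* + B(n, n) \) together with non-degeneracy shows that \( B(m, n) - B(m, n)^* \) can be chosen invertible for some \( m, n \). One of \( m \), \( n \), or \( m + n x \) for suitable \( x \in S \) then works, because a non-zero polynomial does not vanish identically on an infinite field.

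For finite \( F \), argue by hand in the split forms: \( S = F \times F \), or \( S = \Mat(2, F) \) via Morita reduction to a skew-hermitian (i.e.\ alternating or symmetric-type) form over \( F \). These cases are handled by the explicit classification of such forms: over \( F \times F \) the module is \( V \oplus V^{\vee} \) and one takes \( e = v \oplus f \) with \( f(v) = 1 \).

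Finally, lifting each \( e_i \) to a common \( e \in M \) gives \( B(e, e) \) invertible in \( S \), and the induction proceeds. When \( n = 1 \) the same existence statement just says that the generator already works after lifting.
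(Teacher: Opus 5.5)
\textbf{Verdict: there is a gap, and it is in the field case.} Your outer framework is sound. You split off \( e S \) once \( B(e, e) \) is a unit, and you obtain such an \( e \) over \( K \) by lifting from the residue fields of \( K / \Jac(K) \). The field case is the real content of the lemma, and there you divide by the cardinality of \( F \). Your two arguments actually work along the split/non-split dichotomy instead, and each one fails on part of its assigned range.

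\textbf{Infinite fields.} You need \( e \mapsto \mathrm{Nrd}\, B(e, e) \) to be a nonzero polynomial, and the reason you give does not show this. Suppose \( S_F \cong \Mat(2, F) \) with \( \mathrm{char}\, F \neq 2 \). Then invertibility of \( B(m, n) - B(m, n)^* \) does not force any of \( m \), \( n \), \( m + n x \) to have invertible \( B(e, e) \). For example, take \( M = S \) and \( B(x, y) = x^* h y \) with \( h = \sMat{1}{0}{0}{-1} \). Let \( m = \sMat{1}{0}{1}{0} \) and \( n = m \sMat{0}{1}{1}{0} \). Then \( B(m, m) = \sMat{0}{0}{-2}{0} \), and \( B(m, n) - B(m, n)^* = \sMat{2}{0}{0}{-2} \) is a unit. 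Yet \( m S + n S = m S \), and \( \det B(m x, m x) = \det(x)^2 \det B(m, m) = 0 \) for all \( x \in S \).

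The same happens with \( m \) part of a basis. Let \( M \) correspond under Morita to a hyperbolic quadratic space \( (V, b) \) of dimension \( 4 \). Choose \( u \) with \( b(u, u) = 1 \) and \( z \neq 0 \) isotropic and orthogonal to \( u \), and let \( m \), \( n \) be the pairs \( (z, u) \), \( (u, z) \). Then \( B(m, n) - B(m, n)^* \) is again a unit. However, every element of \( m S + n S \) is a pair of vectors in \( \langle u, z \rangle \), a plane on which \( b \) has rank one, so no such element has invertible \( B(e, e) \). So non-vanishing is unproved in the split quaternion case in odd characteristic. There the natural proof of it is the split computation itself.

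\textbf{Finite fields.} You treat only \( F \times F \) and \( \Mat(2, F) \). Quaternion algebras over \( \Field_q \) do split, but a quadratic \'etale algebra can be the field \( \Field_{q^2} \), and that case is missing.

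\textbf{Repair.} Organize the field case as split versus non-split.
\begin{itemize}
\item If \( S_F \) is a field or a division algebra, nonzero elements are units, and your polarization identity works over every \( F \). For \( m \neq 0 \), the image of \( B(m, {-}) \) is a nonzero right ideal, hence all of \( S_F \). Thus \( B(m + n, m + n) - B(m, m) - B(n, n) = y - y^* \) with \( y \in S_F \) arbitrary. This is not identically zero, so one of \( m \), \( n \), \( m + n \) works.
\item If \( S_F \cong F \times F \), your vector \( v \oplus f \) works for every \( F \).
\item If \( S_F \cong \Mat(2, F) \), Morita turns \( B \) into a non-degenerate symmetric bilinear form \( b \) on an \( F \)-space whose dimension is twice the \( S \)-rank of \( M \). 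Then \( B(e, e) \) is a unit if and only if the Gram matrix of the corresponding pair \( (v_1, v_2) \) is invertible. In characteristic \( 2 \), \( B(e, e) = q'(e) + q'(e)^* \) is a scalar. This forces \( b \) to be alternating with \( B(e, e) = b(v_1, v_2) \cdot 1 \). In either characteristic a suitable pair exists by non-degeneracy.
\end{itemize}

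\textbf{Comparison with the paper.} The paper works over a field and only seeks \( B(e, e) \neq 0 \) for some \( e = e_1 + \sum_{i \geq 2} e_i s_i \). It uses a degree-two polynomial argument when \( |K| > 2 \) and explicit identities over \( \Field_2 \). Non-vanishing suffices when every nonzero value \( B(e, e) \) is a unit. That holds in all cases except \( \Mat(2, F) \) in odd characteristic. So your invertibility target, together with your explicit lifting from residue fields, is the more careful route once the field case is repaired.
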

\begin{proof}
	In both cases
	\( S \) is a composition
	\( K \)-algebra with the standard involution,
	\( \lambda = - 1 \), and
	\( \Lambda = K \). Without loss of generality
	\( K \) is a field, so
	\( M \) has some basis
	\( e_1, \ldots, e_n \in M \). Arguing by induction on
	\( n \geq 1 \) it suffices to prove that there exist
	\( s_2, \ldots, s_n \in S \) such that
	\[
		B\Bigl(
			s_1 + \sum_{i = 2}^n e_i s_i,
			s_1 + \sum_{i = 2}^n e_i s_i
		\Bigr)
		\neq
		0.
	\]
	
	Assume the contrary. If we chose a basis of
	\( S \), then the above expression becomes a set of polynomials in coordinates of
	\( s_i \) in some basis of
	\( S \) over
	\( K \). If all of these polynomials are zero, then the hermitian form is identically zero contradicting non-degeneracy. Thus some of them is a non-zero polynomial of total degree
	\( 2 \) taking only zero values. It follows that
	\( K \cong \Field_2 \) has two elements.
	
	We have
	\begin{align*}
			B(e_1, e_1) &= 0,
		\\
			s^* B(e_i, e_j) t &= t^* B(e_i, e_j)^* s
			\text{ for }
			i > j > 1,
			\tag{%
				\( * \)%
			}
		\\
			s^* B(e_i, e_i) s
			&=
			B(e_1, e_i) s + s^* B(e_1, e_i)^*
			\text{ for }
			i > 1.
			\tag{%
				\( * * \)%
			}
	\end{align*}
	Taking
	\( s = t = 1 \) in (%
		\( * \)%
	) we get
	\( B(e_i, e_j)^* = B(e_i, e_j) \), and next taking only
	\( t = 1 \) we obtain
	\( B(e_i, e_j) s = s^* B(e_i, e_j) \). In the quaternion case this means that
	\(
		B(e_i, e_j) s t
		=
		t^* s^* B(e_i, e_j)
		=
		B(e_i, e_j) t s
	\), but additive commutators
	\( [t, s] \) generate
	\( S \) as a right ideal, so
	\( B(e_i, e_j) = 0 \). In the quadratic case the trace map
	\( S \to K,\, s \mapsto s + s^* \) is surjective and again
	\( B(e_i, e_j) = 0 \).
	
	Now consider (%
		\( * * \)%
	). Linearizing this identity we get
	\( s^* B(e_i, e_i) t = t^* B(e_i, e_i) s \), so
	\( B(e_i, e_i) = 0 \) by the above argument. But then (%
		\( * * \)%
	) reduces to
	\( B(e_1, e_i) s + s^* B(e_1, e_i)^* \) and again
	\( B(e_1, e_i) = 0 \). So
	\( B \) is identically zero, a contradiction.
\end{proof}

The next result is proved in a more general form in
\cite[theorem 4.6]{uni-sem}, but assuming that
\( S \) has no residue fields with
\( 2 \) elements. We give a simpler proof only for quadratic form algebras. Recall that if
\( M \) is a quadratic module over a quadratic form algebra
\( S \) such that
\( M_S \) is free of finite rank and the hermitian form is non-degenerate, then
\( \SpecUnit(M) = \Unit(M) \cap \SpecLin(M) \), i.e.\ the group of unitary matrices with the determinant
\( 1 \in S \). As for unitary and elementary groups,
\( \SpecUnit(n, M) = \SpecUnit( M \perp \Hyp(S^n) ) \).

\begin{lemma}
	\label{2a-ab-k1}
	Let
	\( S \) be a quadratic form algebra over semilocal
	\( K \) and
	\( M \) a quadratic module over
	\( S \). Suppose that the hermitian form is non-degenerate and the rank of
	\( M \) over
	\( K \) is constant. Then
	\( \ElemUnit(1, M) = \SpecUnit(1, M) \) and
	\( \KUnit_1(1, M) \) is abelian.
\end{lemma}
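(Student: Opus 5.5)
Since \( \KUnit_1(1, M) = \Unit(1, M) / \ElemUnit(1, M) \), it suffices to show that \( \ElemUnit(1, M) = \SpecUnit(1, M) \). The determinant then embeds \( \KUnit_1(1, M) \) into the abelian group \( S^* \), and in fact into the norm-one units. The inclusion \( \ElemUnit(1, M) \leq \SpecUnit(1, M) \) is clear, because root elements are unipotent triangular matrices and so have determinant \( 1 \). For the converse we take \( g \in \SpecUnit(1, M) \) and reduce it to the identity by multiplying with elementary elements.

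First we use lemma \ref{her-dia} to pick an orthogonal basis \( e_1, \ldots, e_n \) of \( M \) with \( B(e_i, e_i) = b_i \). Note that \( b_i^* = - b_i \), and non-degeneracy forces \( b_i \in S^* \). Next we apply the Gauss-type decomposition of \( \Unit(1, M) \) over the semilocal ring. Since \( K \) is semilocal, we can find \( u \in M \) with \( q(u) \) of a suitable form, so that a Weyl element \( T_{+}(u)\, T_{-}(u')\, T_{+}(u) \) exists. As in the proof of lemma \ref{ort-ab-k1}, we then get
\[
	\Unit(1, M) = \ElemUnit(1, M)\, D,
\]
where \( D \) is the subgroup of elements preserving \( e_{-1} S \), \( M \), and \( e_1 S \). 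Such an element acts on the hyperbolic plane by \( e_1 \mapsto e_1 a \), \( e_{-1} \mapsto e_{-1} (a^*)^{-1} \), and on \( M \) by some \( h \in \Unit(M) \). Thus it suffices to show that the elements \( \mathrm{diag}\bigl( (a^*)^{-1}, h, a \bigr) \) with determinant \( 1 \) lie in \( \ElemUnit(1, M) \).

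The key computation is to produce enough diagonal elements from products of Weyl elements. Let \( w(v) = T_{+}(v)\, T_{-}(v')\, T_{+}(v) \), where \( v = e_i s \) and \( q(e_i s) \) is invertible. Then \( w(v)\, w(e_i)^{-1} \) is diagonal. On \( e_i S \) it acts by multiplication by some norm-type element built from \( s \), and it acts as the identity on the orthogonal complement of \( e_i \). Its hyperbolic part is given by the corresponding \( a \).

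The remaining work has two parts. First, every element of \( \Unit(M) \) must be reduced, modulo such diagonals, to one that is diagonal in the basis \( e_i \). Here we use that \( \Unit(M) \) is generated by hermitian reflections/transvections along vectors with invertible \( q \), which can be reached over the semilocal ring, as in the Cartan--Dieudonn\'e argument. Second, the diagonal elements \( \mathrm{diag}(t_1, \ldots, t_n) \) with \( t_i t_i^* = 1 \), together with the hyperbolic scalar \( a \) and determinant constraint \( \det = 1 \), must be generated by these Weyl products. This uses Hilbert 90 for the quadratic \'etale algebra: a norm-one unit has the form \( t = c / c^* \).

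I expect the main obstacle to be this last step over semilocal rings that have residue fields \( \Field_2 \). There the choice of \( s \) with \( q(e_i s) \) invertible, and surjectivity onto \( S^* \) modulo norms, is delicate. I would first prove everything over \( K / \Jac(K) \) by a field-by-field check, handling the small fields explicitly. Then I would lift using the fact that the congruence kernel modulo \( \Jac(K) \) lies in \( \ElemUnit(1, M) \), via the Gauss decomposition with entries in \( \Jac \).
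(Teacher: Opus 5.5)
Your reduction to proving $\ElemUnit(1, M) = \SpecUnit(1, M)$ is fine, and so is the inclusion $\ElemUnit(1, M) \leq \SpecUnit(1, M)$. The Gauss decomposition $\Unit(1, M) = \ElemUnit(1, M)\, D$ is also correct. The reverse inclusion, however, is the whole content of the lemma, and you have not proved it. You have only moved the problem into the Levi subgroup $D$, and showing that its determinant-one elements are elementary is as hard as the original statement.

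The Weyl products $w(e_i s)\, w(e_i)^{-1}$ only give elements that are diagonal in the basis $e_1, \ldots, e_n$. For a non-diagonal $h \in \Unit(M)$ you fall back on two things, and neither is established:
\begin{itemize}
\item a Cartan--Dieudonn\'e type statement that $\Unit(M)$ over a semilocal ring is generated by unitary reflections;
\item the claim that the corresponding products, with their hyperbolic corrections, lie in $\ElemUnit(1, M)$.
\end{itemize}
This is exactly the kind of input the paper avoids. It notes that the more general result \cite[theorem 4.6]{uni-sem} requires $S$ to have no residue fields with two elements, which is the case you yourself flag as the obstacle.

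Your lifting step is also circular. The Gauss decomposition of an element congruent to $1$ modulo $\Jac(K)$ gives elementary factors times a Levi element congruent to $1$. You would still have to show that this Levi element is elementary. Compare lemma \ref{ort-ab-k1}, where the analogous fact for orthogonal groups has to be imported from \cite[theorem 4.2]{ort-sem}.

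The missing idea is to bypass $D$ and reduce $g$ directly, column by column. Take the orthogonal basis $e_1, \ldots, e_n$ from lemma \ref{her-dia}. Suppose $g e_i = e_i$ for $i < k$, and write $g e_k = e_{-} a \oplus m \oplus e_{+} b$. If $a \in S^*$, left-multiply $g$ by an explicit product $T_{+}(u, q'(u))\, T_{-}(\ldots)$ of two root elements, with $u$ chosen so that $B(u, e_k) = - a$. The result fixes $e_1, \ldots, e_k$.

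After $n$ steps the element fixes $M$ pointwise, so it lies in $\SpecUnit(\Hyp(S)) \cong \SpecLin(2, K)$. This isomorphism matches the two root subgroups with the elementary subgroups of $\SpecLin(2, K)$. Since $\SpecLin(2, K) = \Elem(2, K)$ for semilocal $K$, the proof is finished. No reflections, norm computations, Hilbert 90, or reduction modulo $\Jac(K)$ are needed.
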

\begin{proof}
	By lemma
	\ref{her-dia} the module
	\( M \) has an orthogonal basis
	\( e_1, \ldots, e_n \in M \). We prove that arbitrary
	\( g \in \Unit(1, M) \) can be multiplied by elementary transformations from the right to get an isometry stabilizing all
	\( e_i \). Recall that root elements have the form
	\begin{align*}
			T_{+}(m, x)
			&=
			\Bigl(
				\begin{smallmatrix}
						1 & B( m, {-} ) & x
					\\
						0 & 1 & m
					\\
						0 & 0 & 1
				\end{smallmatrix}
			\Bigr),
		&
			T_{-}(m, x)
			&=
			\Bigl(
				\begin{smallmatrix}
						1 & 0 & 0
					\\
						m & 1 & 0
					\\
						- x & - B( m, {-} ) & 1
				\end{smallmatrix}
			\Bigr)
	\end{align*}
	as endomorphisms of
	\( e_{-} S \oplus M \oplus e_{+} S \), where
	\( m \in M \),
	\( x \in S \), and
	\( q(m) = x + K \).
	
	Assume that
	\( g e_i = e_i \) for
	\( i < k \), so
	\( B(g e_k, e_i) = 0 \) for
	\( i < k \). Let
	\( g e_k = e_{-} a \oplus m \oplus e_{+} b \). If
	\( a \in S^* \), then
	\[
		T_{+}(u, q'(u))\,
		T_{-}\bigl(
			(e_k - m) a^{- 1},
			B(m - e_k, m) (a a^*)^{- 1} + b a^{- 1}
		\bigr)\,
		g
	\]
	stabilizes
	\( e_i \) for
	\( i \leq k \), where
	\( u \in M \) is a vector such that
	\( B(u, e_k) = - a \) and
	\( q'(u) \) is a lift of
	\( q(u) \) to
	\( S \). So the claim about
	\( g \) follows by induction.
	
	Now suppose that
	\( g \in \SpecUnit(1, M) \) stabilizes
	\( M \) pointwise. The group of such elements is isomorphic to
	\( \SpecUnit( \Hyp(S) ) \cong \SpecLin(2, K) \) and this isomorphic preserves both root subgroups. The result from the statement now follows from
	\( \SpecLin(2, K) = \Elem(2, K) \).
\end{proof}

\begin{theorem}
	\label{sol-sem-iso}
	Let
	\( G \) be a simple group scheme over a semilocal ring
	\( K \) of isotropic rank at least
	\( 2 \). If the Tits index is
	\( \TitsIndex{E}{2}{6}{2}{16''} \) or
	\( \TitsIndex{E}{}{8}{2}{78} \) assume that
	\( K \) contains a field. Then the group
	\( \KFunc_1^G(K) \) is solvable with derived length bounded by an absolute constant.
\end{theorem}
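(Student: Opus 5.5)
First I reduce to adjoint groups, and to a connected spectrum. Since \( K \) is semilocal, \( \operatorname{Spec} K \) is a finite disjoint union of connected components, and \( \KFunc_1^G \) splits as a product over them, so I may assume the spectrum is connected. By lemma \ref{sol-iso}, applied to the center \( C \) of \( G \), the map \( \KFunc_1^G(K) \to \KFunc_1^{G/C}(K) \) is a braided crossed module. Its kernel is central and its image lies in the derived subgroup. Hence solvability of \( \KFunc_1^{G/C}(K) \) with derived length \( \leq c \) gives solvability of \( \KFunc_1^G(K) \) with derived length \( \leq c + 1 \). The same argument passes between a simply connected cover, the adjoint group and the intermediate forms, and also between \( [\UnitSch, \UnitSch] \) and its adjoint quotient. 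So it suffices to treat one convenient group in each isogeny class, namely the one given by theorem \ref{twi-iso-cla}.

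\textbf{Classical indices.} For type \( \TitsIndex{A}{1}{n}{r}{(d)} \), the group \( G \) is isogenous to \( \SpecLin(\ell + 1, S) \) with \( \ell = r \geq 2 \). Here \( \KFunc_1 \) injects into \( \KFunc_1(\ell + 1, S) \) modulo a central and abelian part, and by lemma \ref{lin-ab-k1} this group is abelian. For types \( \TitsIndex{B}{}{n}{r}{} \) and \( \TitsIndex{D}{s}{n}{r}{(1)} \), and for the remaining unitary cases, I use \( G \simeq \Unit(2r, M) \) up to isogeny and the derived subgroup. The elementary subgroups agree, because upper and lower triangular matrices give the minimal parabolics. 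By lemma \ref{uni-ab-k1}, \( \KUnit_1(2r, M) \to \KUnit_1(2r + 2, M) \) is injective for \( r \geq 2 \), and the stable group is abelian when \( M_S \) is free. If \( M \) is not free, I embed it as an orthogonal summand of a free module and compare the groups. Passing from \( \Unit \) to its scheme derived subgroup costs one more step, since the quotient by the determinant or spinor norm type part is abelian. This handles all classical indices.

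\textbf{Exceptional indices.} For each exceptional index with relative rank \( \geq 2 \), I choose a Levi-type or subsystem subgroup \( H \leq G \) containing \( L \). The subgroup should be classical, of relative rank \( \geq 1 \), and such that \( G(K) = \Elem_G(K)\, L(K) \) by the Gauss decomposition. Then it remains to control \( L(K) \) modulo \( L(K) \cap \Elem_G(K) \), and I realize \( L \) inside \( H \) with \( L(K) \subseteq H(K) \). This makes \( \KFunc_1^G(K) \) a quotient of \( \KFunc_1^H(K) \), because \( \Elem_H \leq \Elem_G \). Concretely, I take the subsystem obtained by deleting a suitable relative simple root. This gives a classical group of relative rank \( \geq 2 \), or of relative rank \( 1 \) in cases such as \( \TitsIndex{E}{1}{6}{2}{28} \), where the relevant subgroup is an orthogonal group of an anisotropic even form. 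In those rank-one situations lemma \ref{ort-ab-k1} gives abelian \( \mathrm{KO}_1(2, q) \), and lemma \ref{2a-ab-k1} handles the quadratic form algebra cases.

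The main obstacle is \( \TitsIndex{E}{}{7}{2}{31} \). The natural subgroup there is a unitary group of relative rank \( 1 \) over a quaternion form algebra (a \( \RootSys{D}{6} \)-type piece), where neither stability nor lemma \ref{2a-ab-k1} applies directly. I would first try lemma \ref{her-dia} to diagonalize the hermitian form and then imitate the proof of lemma \ref{2a-ab-k1}, clearing basis vectors by root elements. This should reduce \( L(K) \) to a product of rank-one pieces whose commutators lie in \( \Elem_G(K) \). Throughout, each step adds a bounded number to the derived length, which gives the absolute constant.
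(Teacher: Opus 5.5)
Your overall framework matches the paper's: reduce along isogenies with lemma \ref{sol-iso}, treat classical indices through theorem \ref{twi-iso-cla} and lemmas \ref{lin-ab-k1}, \ref{uni-ab-k1}, and treat exceptional indices by placing the anisotropic kernel inside a classical subgroup and using the rank-one lemmas. The exceptional part, however, has two genuine gaps.

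\textbf{The two indices that need a field.} Your scheme implicitly claims to cover \(\TitsIndex{E}{2}{6}{2}{16''}\) and \(\TitsIndex{E}{}{8}{2}{78}\), and it cannot. For \(\TitsIndex{E}{}{8}{2}{78}\) the group \([L, L]\) has absolute type \(\RootSys{E}{6}\). Any root subsystem containing \(\RootSys{E}{6}\) has an exceptional component, so no classical subgroup of \(G\) contains \([L, L]\). For \(\TitsIndex{E}{2}{6}{2}{16''}\) the natural classical subgroup containing \([L, L]\) is the \(\RootSys{A}{5}\) Levi subgroup, of index \(\TitsIndex{A}{2}{5}{1}{ (3) }\). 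It has relative rank one over a degree-\(3\) algebra with unitary involution, and none of lemmas \ref{lin-ab-k1}, \ref{ort-ab-k1}, \ref{2a-ab-k1} covers that case. You never use the hypothesis that \(K\) contains a field. These two indices are exactly where it is needed, and the paper handles them by citing \cite[corollary 6.11(10)]{k1-red-tri}.

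\textbf{The case \(\TitsIndex{E}{}{7}{2}{31}\).} Here the route you sketch stalls. In \(\Unit(M \perp \Hyp(S))\) with \(S\) a quaternion form algebra, clearing an orthogonal basis of \(M\) by root elements as in lemma \ref{2a-ab-k1} only brings \(g\) into \(\Unit(\Hyp(S))\). That group has type \(\RootSys{A}{1} + \RootSys{A}{1}\). The long root elements \(T_\pm(0, z)\), \(z \in \Lambda = K\), generate one copy of \(\SpecLin(2, K)\). The other factor consists of elements acting on both hyperbolic coordinates by left multiplication by a quaternion of reduced norm \(1\). These commute with the long root elements and are not elementary in this rank-one group. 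That second factor is precisely the \(\RootSys{A}{1}\)-factor \(L_2\) of \([L, L]\), so the reduction returns you to the problem you started with. In the quadratic case the corresponding group is \(\SpecUnit(\Hyp(S')) \cong \SpecLin(2, K) = \Elem(2, K)\), which is why lemma \ref{2a-ab-k1} works there and its proof does not transfer.

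The missing idea is to split \([L, L]\) into \(L_1\) (type \(\RootSys{D}{4}\)) and \(L_2\) and control them in different subgroups.
\begin{itemize}
\item \(L_1\) lies in the subgroup \(H_1\) of index \(\TitsIndex{D}{1}{6}{2}{ (1) }\), which has relative rank \(2\), so lemma \ref{uni-ab-k1} applies.
\item \(L_2\) already lies in \(H_3\), the unitary group of \(e_1 S \perp \Hyp(S)\) inside the rank-one group of index \(\TitsIndex{D}{1}{6}{1}{ (2) }\), where \(e_1\) comes from lemma \ref{her-dia}. \(H_3\) has index \(\TitsIndex{D}{2}{3}{1}{ (2) } \cong \TitsIndex{A}{2}{3}{1}{ (1) }\). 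Via \(\RootSys{D}{3} = \RootSys{A}{3}\) and theorem \ref{twi-iso-cla}, it becomes up to isogeny a special unitary group over a \emph{quadratic} form algebra, where lemma \ref{2a-ab-k1} does apply.
\end{itemize}

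\textbf{A smaller point.} A braided crossed module gives that the derived subgroup of \(\KFunc_1^{G/C}(K)\) lies in the image of \(\KFunc_1^G(K)\), not the reverse. That is the property you need to pass solvability from \([\UnitSch, \UnitSch]\) down to its adjoint quotient.
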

\begin{proof}
	We freely use lemma
	\ref{sol-iso} to replace
	\( G \) by an isogenic group scheme if necessary. Classical Tits indices are covered by lemmas
	\ref{lin-ab-k1} and
	\ref{uni-ab-k1} using theorem
	\ref{twi-iso-cla}. By the Gauss decomposition
	\( G(K) \) is generated by
	\( \Elem_G(K) \) and
	\( L(K) \), so the claim trivially holds if
	\( L \) is commutative. This holds if
	\( G \) is quasi-split, i.e.\ for the Tits indices
	\[
		\TitsIndex{D}{s}{4}{2}{2},\,
		\TitsIndex{E}{1}{6}{6}{0},\,
		\TitsIndex{E}{2}{6}{4}{2},\,
		\TitsIndex{E}{}{7}{7}{0},\,
		\TitsIndex{E}{}{8}{8}{0},\,
		\TitsIndex{F}{}{4}{4}{0},\,
		\TitsIndex{G}{}{2}{2}{0}.
	\]
	The cases
	\( \TitsIndex{E}{2}{6}{2}{16''} \) and
	\( \TitsIndex{E}{}{8}{2}{78} \) are proven in
	\cite[corollary 6.11(10)]{k1-red-tri}.
	
	\begin{figure}[ht]
		
		\centering
		
		\begin{tikzpicture}[
			node/.style = {
				circle,
				draw,
				inner sep = 0,
				minimum size = 6
			},
			chosen/.style = {
				circle,
				draw,
				double,
				inner sep = 0,
				minimum size = 6
			},
			affine/.style = {
				circle,
				draw,
				fill = black!50,
				double,
				inner sep = 0,
				minimum size = 6
			},
		]
			
			\newcommand{\edgelen}{0.8}
			\newcommand{\xshift}{8cm}
			\newcommand{\yshift}{4cm}
			
			\begin{scope}[
				scale = \edgelen,
				xshift = { 0 * \xshift },
				yshift = { 0 * \yshift },
			]
				
				\node at (0, 0) {
					\(
						\TitsIndex{D}{1}{5}{1}{ (1) }
						\subseteq
						\TitsIndex{E}{1}{6}{2}{28}
					\)
				};
				
				\node[chosen] (v1-1) at (- 2, 1) { };
				\node[node] (v1-2) at (0, 2) { };
				\node[node] (v1-3) at (- 1, 1) { };
				\node[node] (v1-4) at (0, 1) { };
				\node[node] (v1-5) at (1, 1) { };
				\node[chosen] (v1-6) at (2, 1) { };
				
				\draw (v1-4) -- (v1-3) -- (v1-1);
				\draw (v1-4) -- (v1-5) -- (v1-6);
				\draw (v1-4) -- (v1-2);
				
				\draw[dashed, rounded corners]
					(- 2.5, 0.5)
					rectangle
					(1.5, 2.5);
				
			\end{scope}
			
			\begin{scope}[
				scale = \edgelen,
				xshift = { 1 * \xshift },
				yshift = { 0 * \yshift },
			]
				
				\node at (0, 0) {
					\(
						\TitsIndex{A}{1}{5}{1}{ (3) }
						\subseteq
						\TitsIndex{E}{1}{6}{2}{16}
					\)
				};
				
				\node[node] (v2-1) at (- 2, 1) { };
				\node[ chosen, label = right: { l } ]
					(v2-2) at (0, 2) { };
				\node[node] (v2-3) at (- 1, 1) { };
				\node[ chosen, label = 10: { s } ]
					(v2-4) at (0, 1) { };
				\node[node] (v2-5) at (1, 1) { };
				\node[node] (v2-6) at (2, 1) { };
				
				\draw (v2-4) -- (v2-3) -- (v2-1);
				\draw (v2-4) -- (v2-5) -- (v2-6);
				\draw (v2-4) -- (v2-2);
				
				\draw[dashed, rounded corners]
					(- 2.5, 0.5)
					rectangle
					(2.5, 1.5);
				
			\end{scope}
			
			\begin{scope}[
				scale = \edgelen,
				xshift = { 0 * \xshift },
				yshift = { - 1 * \yshift },
			]
				
				\node at (0, 0) {
					\(
						\TitsIndex{D}{2}{5}{2}{ (1) }
						\subseteq
						\TitsIndex{E}{2}{6}{2}{16'}
					\)
				};
				
				\node[ affine, label = above: { l } ]
					(v3-0) at (- 2, 1.5) { };
				\node[ chosen, label = above: { us } ]
					(v3-1) at (2, 2) { };
				\node[ chosen, label = above: { s } ]
					(v3-2) at (- 1, 1.5) { };
				\node[node] (v3-3) at (1, 2) { };
				\node[node] (v3-4) at (0, 1.5) { };
				\node[node] (v3-5) at (1, 1) { };
				\node[ chosen, label = below: { us } ]
					(v3-6) at (2, 1) { };
				
				\draw
					(v3-4)
					to[ out = 45, in = 180 ]
					(v3-3);
				\draw (v3-3) -- (v3-1);
				\draw
					(v3-4)
					to[ out = - 45, in = 180 ]
					(v3-5);
				\draw (v3-5) -- (v3-6);
				\draw (v3-4) -- (v3-2) -- (v3-0);
				
				\draw[dotted] (v3-1) -- (v3-6);
				\draw[dotted] (v3-3) -- (v3-5);
				
				\draw[dashed, rounded corners]
					(- 2.5, 0.5)
					rectangle
					(1.5, 2.5);
				
			\end{scope}
			
			\begin{scope}[
				scale = \edgelen,
				xshift = { 1 * \xshift },
				yshift = { - 1 * \yshift },
			]
				
				\node at (0, 0) {
					\(
						\TitsIndex{D}{1}{6}{2}{ (1) },
						\TitsIndex{D}{1}{6}{1}{ (2) }
						\subseteq
						\TitsIndex{E}{}{7}{2}{31}
					\)
				};
				
				\node[ affine, label = above: { l } ]
					(v4-0) at (- 3, 1) { };
				\node[ chosen, label = above: { s } ]
					(v4-1) at (- 2, 1) { };
				\node[node] (v4-2) at (0, 2) { };
				\node[node] (v4-3) at (- 1, 1) { };
				\node[node] (v4-4) at (0, 1) { };
				\node[node] (v4-5) at (1, 1) { };
				\node[ chosen, label = above: { us } ]
					(v4-6) at (2, 1) { };
				\node[node] (v4-7) at (3, 1) { };
				
				\draw (v4-4) -- (v4-3);
				\draw (v4-3) -- (v4-1);
				\draw (v4-1) -- (v4-0);
				\draw (v4-4) -- (v4-5);
				\draw (v4-5) -- (v4-6);
				\draw (v4-6) -- (v4-7);
				\draw (v4-4) -- (v4-2);
				
				\draw[dashed, rounded corners]
					(- 3.5, 0.5)
					rectangle
					(1.5, 2.4);
				\draw[dashed, rounded corners]
					(- 1.5, 0.6)
					rectangle
					(3.5, 2.5);
				
			\end{scope}
			
			\begin{scope}[
				scale = \edgelen,
				xshift = { 0 * \xshift },
				yshift = { - 2 * \yshift },
			]
				
				\node at (0, 0) {
					\(
						\TitsIndex{D}{1}{8}{2}{ (1) }
						\subseteq
						\TitsIndex{E}{}{8}{2}{66}
					\)
				};
				
				\node[ affine, label = above: { l } ]
					(v5-0) at (3.5, 1) { };
				\node[ chosen, label = above: { us } ]
					(v5-1) at (- 3.5, 1) { };
				\node[node] (v5-2) at (- 1.5, 2) { };
				\node[node] (v5-3) at (- 2.5, 1) { };
				\node[node] (v5-4) at (- 1.5, 1) { };
				\node[node] (v5-5) at (- 0.5, 1) { };
				\node[node] (v5-6) at (0.5, 1) { };
				\node[node] (v5-7) at (1.5, 1) { };
				\node[ chosen, label = above: { s } ]
					(v5-8) at (2.5, 1) { };
				
				\draw (v5-4) -- (v5-3) -- (v5-1);
				\draw (v5-4) -- (v5-2);
				\draw (v5-4) -- (v5-5);
				\draw (v5-5) -- (v5-6);
				\draw (v5-6) -- (v5-7);
				\draw (v5-7) -- (v5-8);
				\draw (v5-8) -- (v5-0);
				
				\draw[dashed, rounded corners]
					(- 3, 0.5)
					rectangle
					(4, 2.5);
				
			\end{scope}
			
			\begin{scope}[
				scale = \edgelen,
				xshift = { 1 * \xshift },
				yshift = { - 2 * \yshift },
			]
				
				\node at (0, 0) {
					\(
						\TitsIndex{D}{1}{5}{1}{ (1) }
						\subseteq
						\TitsIndex{E}{}{7}{3}{28}
					\)
				};
				
				\node[ chosen, label = above: { s } ]
					(v6-1) at (- 2.5, 1) { };
				\node[node] (v6-2) at (- 0.5, 2) { };
				\node[node] (v6-3) at (- 1.5, 1) { };
				\node[node] (v6-4) at (- 0.5, 1) { };
				\node[node] (v6-5) at (0.5, 1) { };
				\node[ chosen, label = above: { s } ]
					(v6-6) at (1.5, 1) { };
				\node[ chosen, label = above: { l } ]
					(v6-7) at (2.5, 1) { };
				
				\draw (v6-4) -- (v6-3) -- (v6-1);
				\draw (v6-4) -- (v6-2);
				\draw (v6-4) -- (v6-5);
				\draw (v6-5) -- (v6-6);
				\draw (v6-6) -- (v6-7);
				
				\draw[dashed, rounded corners]
					(- 3, 0.5)
					rectangle
					(1, 2.5);
				
			\end{scope}
			
			\begin{scope}[
				scale = \edgelen,
				xshift = { 0 * \xshift },
				yshift = { - 3 * \yshift },
			]
				
				\node at (0, 0) {
					\(
						\TitsIndex{D}{1}{6}{3}{ (2) }
						\subseteq
						\TitsIndex{E}{}{7}{4}{9}
					\)
				};
				
				\node[ chosen, label = above: { l } ]
					(v7-1) at (- 2.5, 1) { };
				\node[node] (v7-2) at (- 0.5, 2) { };
				\node[ chosen, label = above: { l } ]
					(v7-3) at (- 1.5, 1) { };
				\node[ chosen, label = 70: { s } ]
					(v7-4) at (- 0.5, 1) { };
				\node[node] (v7-5) at (0.5, 1) { };
				\node[ chosen, label = above: { s } ]
					(v7-6) at (1.5, 1) { };
				\node[node] (v7-7) at (2.5, 1) { };
				
				\draw (v7-4) -- (v7-3) -- (v7-1);
				\draw (v7-4) -- (v7-2);
				\draw (v7-4) -- (v7-5);
				\draw (v7-5) -- (v7-6);
				\draw (v7-6) -- (v7-7);
				
				\draw[dashed, rounded corners]
					(- 2, 0.5)
					rectangle
					(3, 2.5);
				
			\end{scope}
			
			\begin{scope}[
				scale = \edgelen,
				xshift = { 1 * \xshift },
				yshift = { - 3 * \yshift },
			]
				
				\node at (0, 0) {
					\(
						\TitsIndex{D}{1}{5}{1}{ (1) }
						\subseteq
						\TitsIndex{E}{}{8}{4}{28}
					\)
				};
				
				\node[ chosen, label = above: { s } ]
					(v8-1) at (- 3, 1) { };
				\node[node] (v8-2) at (- 1, 2) { };
				\node[node] (v8-3) at (- 2, 1) { };
				\node[node] (v8-4) at (- 1, 1) { };
				\node[node] (v8-5) at (0, 1) { };
				\node[ chosen, label = above: { s } ]
					(v8-6) at (1, 1) { };
				\node[ chosen, label = above: { l } ]
					(v8-7) at (2, 1) { };
				\node[ chosen, label = above: { l } ]
					(v8-8) at (3, 1) { };
				
				\draw (v8-4) -- (v8-3) -- (v8-1);
				\draw (v8-4) -- (v8-2);
				\draw (v8-4) -- (v8-5);
				\draw (v8-5) -- (v8-6);
				\draw (v8-6) -- (v8-7);
				\draw (v8-7) -- (v8-8);
				
				\draw[dashed, rounded corners]
					(- 3.5, 0.5)
					rectangle
					(0.5, 2.5);
				
			\end{scope}
			
		\end{tikzpicture}
		
		\caption{%
			Exceptional Tits indices and their subindices from the proof of theorem
			\ref{sol-sem-iso}. The highest roots in affine diagrams are denoted by gray circles. Letters correspond to the lengths of relative roots.%
		}
		
	\end{figure}
	
	In the case
	\( \TitsIndex{E}{1}{6}{2}{16} \) the relative root system has type
	\( \RootSys{G}{2} \) and the absolute root system of
	\( L \) has type
	\( 2 \RootSys{A}{2} \). Let
	\( H \leq G \) be the closed reductive group subscheme generated by
	\( U_{- \alpha} \),
	\( U_\alpha \) as an fppf sheaf for a short root
	\( \alpha \). Considering the root diagram of
	\( \RootSys{E}{6} \) from
	\cite{atlas} one can see that
	\( H \) is simple with the Tits index
	\( \TitsIndex{A}{1}{5}{1}{ (3) } \) and contains
	\( [L, L] \). We are done by theorem
	\ref{twi-iso-cla} and lemma
	\ref{lin-ab-k1}.
	
	Next consider
	\( \TitsIndex{E}{2}{6}{2}{16'} \) and
	\( \TitsIndex{E}{}{8}{2}{66} \). In both these cases the relative root system has type
	\( \RootSys{BC}{2} \) and the scheme derived subgroup
	\( [L, L] \) is simple. Let
	\( H \leq G \) be the closed reductive group subscheme generated by root subgroups with roots from
	\( \RootSys{C}{2} \subseteq \RootSys{BC}{2} \), it contains
	\( [L, L] \). The root diagrams from
	\cite{atlas} show that the Tits indices of
	\( H \) are
	\( \TitsIndex{D}{2}{5}{2}{ (1) } \) and
	\( \TitsIndex{D}{1}{8}{2}{ (1) } \), so the result follows from theorem
	\ref{twi-iso-cla} and lemma
	\ref{uni-ab-k1}.
	
	For
	\( \TitsIndex{E}{}{7}{4}{9} \) the relative root system has type
	\( \RootSys{F}{4} \) and the absolute root system of
	\( L \) has type
	\( 3 \RootSys{A}{1} \). Let
	\( H \leq G \) be the closed reductive group subscheme generated by root subgroups with roots from
	\( \RootSys{C}{3} \subseteq \RootSys{F}{4} \). Looking and the root diagram
	\cite{atlas} one can see that the Tits index of
	\( H \) is
	\( \TitsIndex{D}{1}{6}{3}{ (2) } \) and
	\( [L, L] \leq H \), so
	\( \KFunc_1^G(K) \) is again solvable by theorem
	\ref{twi-iso-cla} and lemma
	\ref{uni-ab-k1}.
	
	Now consider the Tits indices
	\( \TitsIndex{E}{1}{6}{2}{28} \),
	\( \TitsIndex{E}{}{7}{3}{28} \), and
	\( \TitsIndex{E}{}{8}{4}{28} \). In these cases
	\( [L, L] \) is simple with the absolute root system of type
	\( \RootSys{D}{4} \). The relative root system of
	\( G \) has types
	\( \RootSys{A}{2} \),
	\( \RootSys{C}{3} \), and
	\( \RootSys{F}{4} \) respectively. Let
	\( H \leq G \) be the closed reductive group subscheme generated by
	\( U_{- \alpha} \) and
	\( U_\alpha \) as an fppf sheaf (where
	\( \alpha \) is short in the last two cases), it contains
	\( [L, L] \). By
	\cite{atlas} this group scheme is simple with the Tits index
	\( \TitsIndex{D}{1}{5}{1}{ (1) } \) and now we apply theorem
	\ref{twi-iso-cla} and lemma
	\ref{ort-ab-k1}.
	
	In the remaining case
	\( \TitsIndex{E}{}{7}{2}{31} \) the relative root system has type
	\( \RootSys{BC}{2} \) and the absolute root system of
	\( L \) has type
	\( \RootSys{A}{1} + \RootSys{D}{4} \). Firstly let
	\( H_1 \leq G \) be the closed reductive group subscheme generated by root subgroups with roots from the relative root subsystem of type
	\( \RootSys{C}{2} \). By
	\cite{atlas} it has the Tits index
	\( \TitsIndex{D}{1}{6}{2}{ (1) } \), so theorem
	\ref{twi-iso-cla} and lemma
	\ref{uni-ab-k1} imply that
	\( (L \cap H_1)(K) \) is solvable modulo its intersection with
	\( \Elem_G(K) \). The group scheme
	\( [L, L] \) is a central product of two simple group subschemes
	\( L_1 \) and
	\( L_2 \), the first one with the absolute root system
	\( \RootSys{D}{4} \) and the second one with the absolute root system
	\( \RootSys{A}{1} \), and
	\( L \cap H_1 \) contains only
	\( L_1 \).
	
	To deal with the second factor consider another closed reductive group subscheme
	\( H_2 \leq G \) generated by root subgroups with roots from a root subsystem of type
	\( \RootSys{BC}{1} \). By
	\cite{atlas} it has the Tits index
	\( \TitsIndex{D}{1}{6}{1}{ (2) } \) and it contains the whole group subscheme
	\( [L, L] \). It remains to prove that
	\( H_2(K) / ( \Elem_{H_2}(K)\, L_1 ) \) is solvable. Again by theorem
	\ref{twi-iso-cla} the group scheme
	\( H_2 \) up to isogeny is the connected component of the unitary group scheme of a quadratic module
	\( M \perp \Hyp(S) \) over a quaternion form
	\( K \)-algebra
	\( S \), where the hermitian form is non-degenerate and
	\( M \) has constant rank
	\( 16 \) over
	\( K \). By lemma
	\ref{her-dia} this module
	\( M \) has an orthogonal basis
	\( (e_1, e_2, e_3, e_4) \). Let
	\( H_3 \leq H_2 \) be the closed reductive group subscheme corresponding to the connected component of the unitary group scheme of the quadratic module
	\( e_1 S \perp \Hyp(S) \). It still contains the factor
	\( L_2 \), but its Tits index is
	\( \TitsIndex{D}{2}{3}{1}{ (2) } \).
	
	This Tits index is isomorphic to
	\( \TitsIndex{A}{2}{3}{1}{ (1) } \). In other words, up to isogeny
	\( H_3 \) is also the special unitary group scheme of a quadratic module
	\( M' \perp \Hyp(S') \) over a quadratic form
	\( K \)-algebra
	\( S' \) by the final application of theorem
	\ref{twi-iso-cla}, where the hermitian form is again non-degenerate and
	\( M' \) has constant rank
	\( 4 \) over
	\( K \). The result follows from lemma
	\ref{2a-ab-k1}.
\end{proof}

\bibliographystyle{plain}
\bibliography{references}

\end{document}